\pgfplotsset{compat=1.12}
\def\notshow#1\notshowend{} %
\newcommand{\C}{\mathcal{C}}
\newcommand{\df}{\mathrm{d}}
\def\bb#1\eb{\textcolor{blue}{#1}} 
\def\br#1\er{\textcolor{red}{#1}} %
\def\bm#1\em{\textcolor{purple}{#1}} %
\newcommand{\R}{\mathds R}
\newcommand{\N}{\mathds N}
\newtheorem{thm}{Theorem}
\newtheorem{prop}{Proposition}
\newtheorem{lem}{Lemma}
\newtheorem{cor}{Corollary}
\newtheorem{defi}{Definition}
\newtheorem{rem}{Remark}
\newcommand{\ben}{\begin{enumerate}}
\newcommand{\een}{\end{enumerate}}
\newcommand{\bit}{\begin{itemize}}
\newcommand{\eit}{\end{itemize}}
\newcommand{\edoc}{\end{document}}
\begin{document}
\mainmatter              
\title{Lightlike hypersurfaces and time-minimizing geodesics in cone structures\thanks{This is a preprint of the following chapter: M. \'{A}. Javaloyes and E. Pend\'{a}s-Recondo, Lightlike Hypersurfaces and Time-Minimizing Geodesics in Cone Structures, published in Developments in Lorentzian Geometry, Springer Proceedings in Mathematics \& Statistics, vol. 389, edited by A. L. Albujer et al., 2022, Springer, reproduced with permission of Springer Nature Switzerland AG. The final authenticated version is available online at: http://dx.doi.org/10.1007/978-3-031-05379-5.}}
\titlerunning{Lightlike hypersurfaces and time-minimizing geodesics}  
%
\author{Miguel \'{A}ngel Javaloyes \and Enrique Pend\'{a}s-Recondo}
\authorrunning{M. \'{A}. Javaloyes and E. Pend\'{a}s-Recondo} 
%
%
\institute{Departamento de Matem\'{a}ticas, Universidad de Murcia, Campus de Espinardo, 30100 Espinardo, Murcia, Spain\\
\email{majava@um.es}\\
\email{e.pendasrecondo@um.es}}

\maketitle              

\begin{abstract}
Some well-known Lorentzian concepts are transferred into the more general setting of cone structures, which provide both the causality of the spacetime and the notion of cone geodesics without making use of any metric. Lightlike hypersurfaces are defined within this framework, showing that they admit a unique foliation by cone geodesics. This property becomes crucial after proving that, in globally hyperbolic spacetimes, achronal boundaries are lightlike hypersurfaces under some restrictions, allowing one to easily obtain some time-minimization properties of cone geodesics among causal curves departing from a hypersurface of the spacetime.
\keywords{cone structures, cone geodesics, Lorentz-Finsler metrics, Finsler spacetimes, lightlike hypersurfaces, Fermat's principle, Zermelo's navigation problem.}
\end{abstract}

\section{Introduction}
One of the key properties of Finsler metrics is that their indicatrix is transversal to the position vector. Indeed, this property allows for a more general definition of Finsler Geometry as in \cite{Bryant02} based on the indicatrix. But when one considers pseudo-Finsler metrics and allows for null directions, this property is lost. Indeed, the null cone (the subset of lightlike directions) is always tangent to the position vector and, in fact, it always contains the semi-line from the origin. Unlike the first case in which the indicatrix is transversal to the position vector, the null cone does not determine the pseudo-Finsler metric in some open subset of the tangent bundle, and it turns out that the lightlike geodesics are determined only up to reparametrization. Nevertheless, using a certain quotient space, it is possible to define some kind of curvature invariants (see \cite{Mak18}), and on the other hand, the focal points of these lightlike geodesics do not depend on the pseudo-Finsler metric used to compute them \cite{JavSoa20}. Additionally, if these null cones enclose a convex subset in every tangent space, then it is possible to study their causal relations, namely, the connections between points by means of curves whose tangent vectors lie always inside the null cones. A study of causal properties from this general point of view was undertaken in \cite{FS}, and then applied to Finsler spacetimes in \cite{JS14}. Since then, there has been  a renewed interest in the so-called cone structures \cite{BS18,BS20,Min,JS20}.   Remarkably, these cone structures can be used to solve Zermelo's navigation problem \cite{JS20} and to describe a time-dependent wavefront, e.g., sound waves or wildfires in the presence of wind (see \cite{JPS} and references therein). Our aim in this work is to adapt and generalize some specific Lorentzian notions and results to the cone structures framework.

The paper is structured as follows. We start in \S \ref{sec:preliminary} introducing the notion of cone structures (Def. \ref{def:cone}), following mainly \cite{JS20}. This allows one, without the need of a metric, to establish a causality on the spacetime, making a distinction between timelike, lightlike and spacelike vectors (Def. \ref{def:causality}), and even to define a generalized notion of geodesic: the cone geodesics (Def. \ref{def:cone_geod}). Anyway, working with a specific metric will enable us to obtain results that, although more general, resemble the Lorentzian ones. To this end, we remark the relationship between both notions: a Lorentz-Finsler metric uniquely determines a cone structure, and a cone structure uniquely defines a class of Lorentz-Finsler metrics whose lightlike pregeodesics coincide with the cone geodesics of the cone structure (Thm. \ref{th:cone_geod}).

Then, in \S \ref{sec:light_hyp} we move on to define lightlike hypersurfaces within this framework (Def. \ref{def:lig_hyp}). In particular, we show that our definition can be expressed in terms of a Lorentz-Finsler metric, therefore generalizing the Lorentzian concept (Prop. \ref{prop:lig_hyp}), and that some of the usual Lorentzian properties of this type of hypersurfaces are still valid here (Prop. \ref{prop:int_cur_geod} and \ref{prop:tra_int}).

Next, in \S \ref{sec:smooth} we focus on the smoothness of achronal boundaries, which are, in general, (non-smooth) topological hypersurfaces of the spacetime. In order to ensure the smoothness we need the spacetime $ M $ to be globally hyperbolic, which implies that $ M $ is topologically a product $ \R \times D $, with the projection $ t: M \rightarrow \R $ being a temporal function. Then, for any compact hypersurface $ S $ of $ M $ included in the slice $ \{t=0\}:=\{0\}\times D $, $ \partial I^+(S)  \setminus S  $ is a lightlike hypersurface for small $ t $ (Thm. \ref{th:lig_hyp}).

Finally, this allows us in \S \ref{sec:min_geod} to immediately prove some results regarding the time-minimization properties of cone geodesics. Specifically, we show that any causal curve entirely contained in $ \partial I^+(S) $ must be a cone geodesic (Prop. \ref{prop:cau_cur}) that arrives earlier at any of its points than any other causal curve departing from $ S $ (Thm. \ref{th:min_time}).  Even though we provide self-contained proofs based on the smoothness of $ \partial I^+(S) \setminus S $, the relationship of these results with the ones in \cite{JPS} is discussed in Rem. \ref{rem:rel_huygens}. 

Fig. \ref{fig} summarizes the main results and depicts the basic geometrical picture one should have in mind throughout this work.

\begin{figure}
\centering
\includegraphics[width=0.7\textwidth]{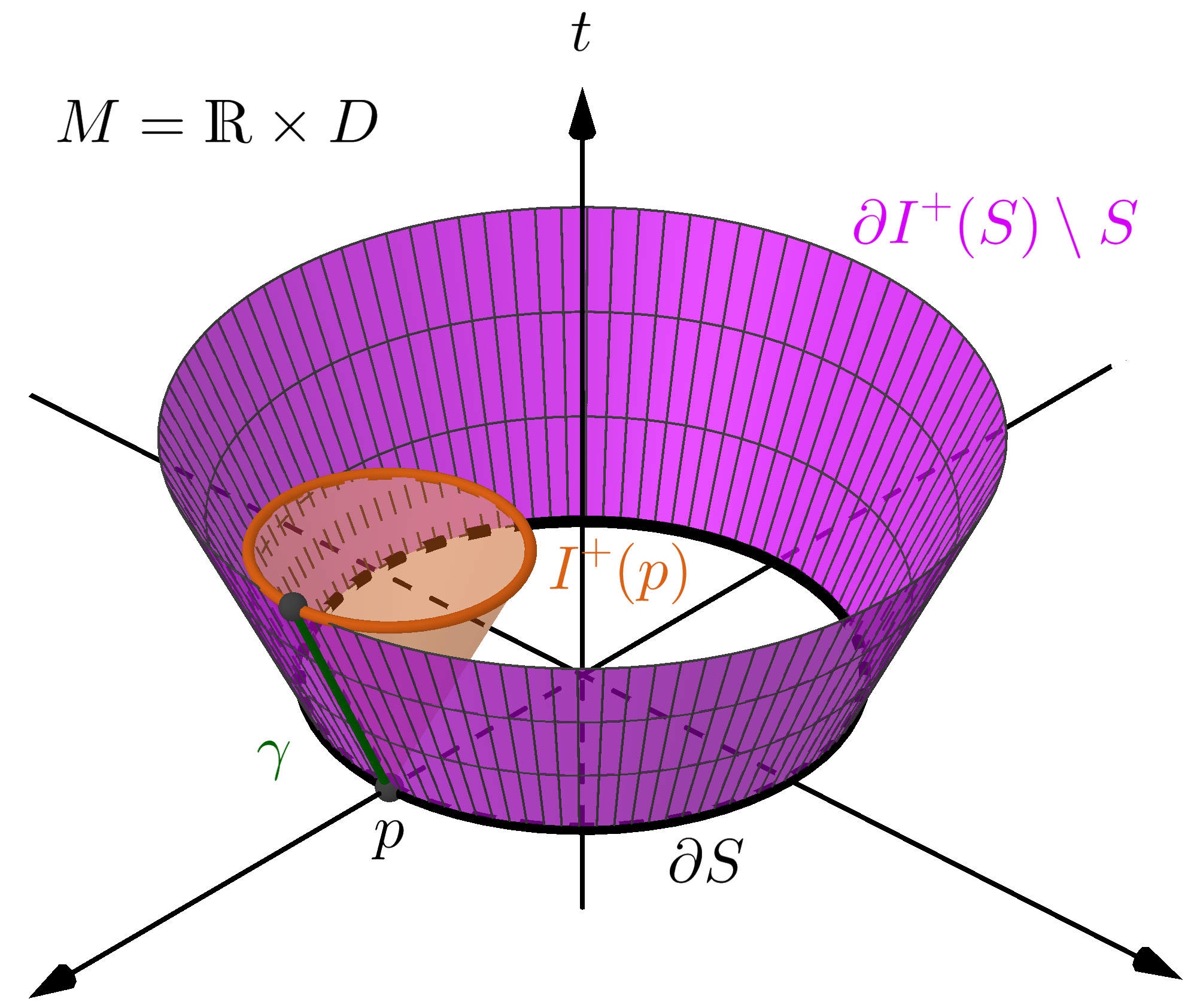}
\caption{The cone structure provides the causality of the spacetime $ M $, including the chronological futures $ I^+(p) $. When $ M $ is globally hyperbolic, it can be decomposed as a product $ \R \times D $. In this case, $ \partial I^+(S)  \setminus S $ becomes a lightlike hypersurface, at least for small $ t $. Moreover, any causal curve $ \gamma $ entirely contained in $ \partial I^+(S) $ is a cone geodesic that minimizes the propagation time from $ S $.}
\label{fig}
\end{figure}

\section{Preliminary notions on cone structures}
\label{sec:preliminary}
This section summarizes the main definitions regarding cone structures and its relation to Lorentz-Finsler metrics, following \cite{JS20}. Let $ V $ and $ M $ be, respectively, a real vector space and a smooth (namely, $C^\infty$) manifold of dimension $ m = n+1 \geq 3 $, being $ TM $ the tangent bundle of $ M $.

\begin{defi}
\label{def:cone}
\begin{enumerate}[(i)]
\item A hypersurface\footnote{Throughout this work, every hypersurface or submanifold will be assumed smooth and embedded, unless otherwise specified.} $ \mathcal{C}_0 $ of $ V \setminus \lbrace 0 \rbrace $ is a {\em cone} if it satisfies the following properties:
\begin{enumerate}[(a)]
\item {\em Conic}: for all $ v \in \mathcal{C}_0 $, $ \lbrace \lambda v: \lambda > 0 \rbrace \subset \mathcal{C}_0 $.
\item {\em Salient}: if $ v \in \mathcal{C}_0 $, then $ -v \notin \mathcal{C}_0 $.
\item {\em Convex interior}: $ \mathcal{C}_0 $ is the boundary in $ V \setminus \lbrace 0 \rbrace $ of an open subset $ A_0 \subset V \setminus \lbrace 0 \rbrace $ (the $ \mathcal{C}_0 $-interior) which is convex, i.e., for any $ v,u \in A_0 $, the segment $ \lbrace \lambda v + (1-\lambda)u: 0 \leq \lambda \leq 1 \rbrace \subset V $ is included entirely in $ A_0 $.
\item {\em (Non-radial) strong convexity}: the second fundamental form of $ \mathcal{C}_0 $ as an affine hypersurface of $ V $ is positive semi-definite (with respect to an inner direction pointing out to $ A_0 $) and its radical at each $ v \in \mathcal{C}_0 $ is spanned by the radial direction $ \lbrace \lambda v: \lambda > 0 \rbrace $.
\end{enumerate}

\item A hypersurface $ \mathcal{C} $ of $ TM \setminus \textup{\textbf{0}} $ is a {\em cone structure} if for each $ p \in M $:
\begin{enumerate}[(a)]
\item $ \mathcal{C} $ is transverse to the fibers of the tangent bundle, i.e., if $ v \in \mathcal{C}_p \coloneqq T_pM \cap \mathcal{C} $, then $ T_v(T_pM) + T_{(p,v)}\mathcal{C} = T_{(p,v)}(TM) $,\footnote{This condition is necessary to ensure that the fibers $\C_p$ vary smoothly with $p\in M$  (see \cite[Rem. 2.8]{JS20}).} and
\item $ \mathcal{C}_p $ is a cone in $ T_pM $.
\end{enumerate}
We denote by $ A_p $ the $ \mathcal{C}_p $-interior, and $ A \coloneqq \cup_{p \in M} A_p $ is the {\em cone domain}.
\end{enumerate}
\end{defi}

More intuitively (see Fig. \ref{Fig1}), any cone can be constructed by taking a compact strongly convex hypersurface $ \Sigma_0 $ of an affine hyperplane $ \Pi \subset V $, with $ 0 \notin \Pi $, and taking all the open half-lines through $ \Sigma_0 $ starting at $ 0 $ \cite[Lem. 2.5]{JS20}.
	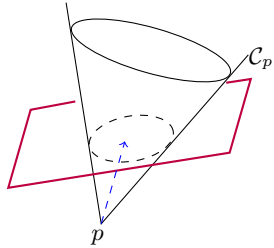
\begin{figure}
	\centering
\begin{tikzpicture}[scale=0.6,rotate around={-16:(-3.88,2)}]
\draw
(-6.1,4.5) -- (-4,0) -- (-1.9,4.5);
\draw[thick,color=purple] (-5.3,2.45) -- (-6.2,2) -- (-6.2,0.2) -- (-1.7,2.3) -- (-1.7,4) -- (-2.2,3.77);
\draw[blue,dashed,->] (-4,0) -- (-4,1.9); 
\draw (-4,4) ellipse (1.83cm and 0.5cm);
\draw[dashed,rotate around={26:(-3.88,2)}] (-3.88,2) ellipse (0.943cm and 0.5cm);
\node at (-4,-0.3)(s){$p$};
\node at (-1.6,4.5)(s){$\C_p$};


\end{tikzpicture}
\caption{Strongly convex cone structure cut by an affine hyperplane with no intersection (as a vector space) with the cone.}
\label{Fig1}
\end{figure}

Cone structures provide some classes of privileged vectors, which can be used to define notions that generalize those in the causal theory of classical spacetimes.

\begin{defi}
\label{def:causality}
Given a cone structure $ \mathcal{C} $ in $ M $, we say that a vector $ v \in T_pM $ is
\begin{itemize}
\item {\em timelike} if $ v \in A_p $,
\item {\em lightlike} if $ v \in \mathcal{C}_p $,
\item {\em causal} if it is timelike or lightlike, i.e., if $ v \in \overline{A}_p \setminus \lbrace 0 \rbrace $,\footnote{It is usual to distinguish {\em future-directed} vectors (when $ v \in \overline{A}_p \setminus \lbrace 0 \rbrace $) from {\em past-directed} ones (when $ -v \in \overline{A}_p \setminus \lbrace 0 \rbrace $). Nevertheless, this distinction will not be necessary here, as we will only focus on future-directed vectors and, according to our definition, a causal vector is always future-directed.}
\item {\em spacelike} if neither $ v $ nor $ -v $ is causal.
\end{itemize}

Analogously, we say that a piecewise smooth curve $ \gamma: I \rightarrow M $ is timelike, lightlike, causal or spacelike, when its tangent vector $ \gamma' $ (or both $ \gamma'(t_0^+) $ and $ \gamma'(t_0^-) $ at any break $ t_0 \in I $) is timelike, lightlike, causal or spacelike, respectively. This allows us to define the following sets:
\begin{itemize}
\item {\em chronological future}: $ I^+(p) \coloneqq \lbrace q \in M: \exists \text{ timelike curve from } p \text{ to } q \rbrace $,
\item {\em chronological past}: $ I^-(p) \coloneqq \lbrace q \in M: \exists \text{ timelike curve from } q \text{ to } p \rbrace $,
\item {\em causal future}: $ J^+(p) \coloneqq \lbrace q \in M: p=q \text{ or } \exists \text{ causal curve from } p \text{ to } q \rbrace $,
\item {\em causal past}: $ J^-(p) \coloneqq \lbrace q \in M: p=q \text{ or } \exists \text{ causal curve from } q \text{ to } p \rbrace $.
\end{itemize}
Also, we say that two points $ p,q $ are horismotically related, denoted $ p \rightarrow q $, if $ q \in J^+(p) \setminus I^+(p) $.

Finally, a {\em temporal function} is a smooth real function $ t: M \rightarrow \R $ such that it is strictly increasing when composed with timelike curves and no causal vector is tangent to the slices $ \lbrace t = \text{constant} \rbrace $.
\end{defi}

Bear in mind that the chronological sets $ I^{\pm}(p) $ are always open, in fact $ I^{\pm}(S) = \textup{Int}(J^\pm(S)) $ for any $ S \subset M $,  and that  the topological closure and boundary of the chronological and causal sets coincide, i.e., $ \overline{I^\pm(S)}=\overline{J^\pm(S)} $ and $ \partial I^\pm(S)=\partial J^\pm(S) $  (see \cite[Corollary 6.6]{AJ} and \cite[Thm. 2.7]{Min}). The latter sets are called {\em achronal boundaries} and they indeed have the property of being achronal, i.e., no two points in them can be joined by a timelike curve (see   \cite[Thm. 6.9]{AJ} and  \cite[Prop. 2.13]{Min}).

Cone structures also admit the following notion of geodesic.

\begin{defi}
\label{def:cone_geod}
Let $ \mathcal{C} $ be a cone structure in $ M $. A continuous curve $ \gamma: I \rightarrow M $ is a {\em cone geodesic} if it is locally horismotic, i.e., for each $ t_0 \in I $ there exists an open neighborhood $ U $ of $ \gamma(t_0) $ such that, if $ I_{\varepsilon} \coloneqq [t_0-\varepsilon,t_0+\varepsilon] \cap I $ satisfies $ \gamma(I_{\varepsilon}) \subset U $ for some $ \varepsilon > 0 $, then
$$
t_1 < t_2 \Leftrightarrow \gamma(t_1) \rightarrow_U \gamma(t_2), \quad \forall t_1,t_2 \in I_{\varepsilon},
$$
where $ \rightarrow_U $ is the horismotic relation for the natural restriction $ \mathcal{C}_U $ of the cone structure to $ U $.
\end{defi}

Now let us define Lorentz-Finsler metrics, which are strongly related to cone structures, and provide a link between both notions.

\begin{defi}
\begin{enumerate}[(i)]
\item A positive function $ L: A_0 \subset V \setminus \lbrace 0 \rbrace \rightarrow \R^+ $ is a {\em Lorentz-Minkowski norm} if
\begin{enumerate}
\item $ A_0 $ is a conic domain, i.e., $ A_0 $ is open, non-empty, connected and if $v\in A_0$, then $\lambda v\in A_0, \forall \lambda>0$,
\item $ L $ is smooth and positively two-homogeneous, i.e., $ L(\lambda v) = \lambda^2L(v) $ for all $ v \in A_0, \lambda > 0 $,
\item for every $ v \in A_0 $, the {\em fundamental tensor} $ g_v $, given by
\begin{equation*}
g_v(u,w) = \left.\frac{1}{2}\frac{\partial^2}{\partial\delta\partial\eta}L(v + \delta u + \eta w)\right\rvert_{\delta=\eta=0}, \quad \forall u,w \in V,
\end{equation*}
has index $ n $, and
\item the topological boundary $ \mathcal{C}_0 $ of $ A_0 $ in $ V \setminus \lbrace 0 \rbrace $ is smooth and $ L $ can be smoothly extended as zero to $ \mathcal{C}_0 $ with non-degenerate fundamental tensor.
\end{enumerate}

\item A positive function $ L: A \subset TM \setminus \textup{\textbf{0}} \rightarrow \R^+ $ is a {\em Lorentz-Finsler metric} if
\begin{enumerate}[(a)]
\item $ \overline{A} \setminus \textup{\textbf{0}} $ is a submanifold of $ TM \setminus \textup{\textbf{0}} $ with boundary $ \mathcal{C} $,
\item each $ L_p \coloneqq L\rvert_{A_p} $ is a Lorentz-Minkowski norm for all $ p \in M $, and
\item $ L $ is smooth and can be smoothly extended as zero to $ \mathcal{C} $.
\end{enumerate}
In this case, we say that $ (M,L) $ is a {\em Finsler spacetime}.
\end{enumerate}
\end{defi}

\begin{rem}
Since any Lorentz-Finsler metric $ L: A \rightarrow \R^+ $ is smooth on $ \mathcal{C} $ with non-degenerate fundamental tensor, $ L $ can always be smoothly extended to an open conic domain $ A^* $ containing $ \overline{A} \setminus \{0\} $ in such a way that $ L $ has index $ n $ on $ A^* $ and $ L < 0 $ outside $ \overline{A} \cup -\overline{A} $.\footnote{Anyway, the smoothness on all $ A^* $ will not be strictly needed here, just on a neighbourhood of $ \mathcal{C} $, as we will only focus on lightlike directions.}
\end{rem}

Each Lorentz-Finsler metric $ L: A \rightarrow \R^+ $ determines a unique cone structure. Indeed, the boundary $ \mathcal{C} $ of $ A $ in $ TM \setminus \textbf{0} $ is a cone structure with cone domain $ A $ \cite[Cor. 3.7]{JS20}. Conversely, each cone structure $ \mathcal{C} $ uniquely determines a (non-empty) class of anisotropically equivalent Lorentz-Finsler metrics \cite[Rem. 5.9]{JS20}, any of which will be called {\em compatible with} $ \mathcal{C} $. All these metrics share the same lightlike pregeodesics (see, e.g., \cite[Prop. 3.4]{JavSoa20}),\footnote{The geodesics of a Lorentz-Finsler metric can be defined as the critical curves of its energy functional. Moreover, they determine a spray and can also be obtained as the autoparallel curves of several classical connections (Berwald, Cartan,  Chern and Hashiguchi), having all of them the same associated non-linear connection.} which coincide with the cone geodesics of $\C$, as shown by the following result \cite[Thm. 6.6]{JS20}.

\begin{thm}
\label{th:cone_geod}
A curve $ \gamma: I \rightarrow M $ is a cone geodesic of $ \mathcal{C} $ if and only if $ \gamma $ is a lightlike pregeodesic for one (and then, for all) Lorentz-Finsler metric $ L $ compatible with $ \mathcal{C} $.
\end{thm}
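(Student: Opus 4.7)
The plan is to fix a single Lorentz--Finsler metric $L$ compatible with $\C$ and work locally. The paper has already recorded that any two compatible metrics are anisotropically equivalent and therefore share their lightlike pregeodesics, so the property ``$\gamma$ is a lightlike pregeodesic of $L$'' does not depend on which compatible $L$ we choose; it thus suffices to prove, for a single such $L$, the equivalence between being locally horismotic (Def.~\ref{def:cone_geod}) and being a lightlike pregeodesic of $L$. Both conditions are local, so around each $t_0\in I$ I would work in a geodesically convex normal neighborhood $U$ of $\gamma(t_0)$ for the spray of $L$: a $U$ on which, for every $p\in U$, the exponential map $\exp_p\colon V_p\to U$ is a diffeomorphism from some neighborhood $V_p$ of $0\in T_pM$, and on which the causal relations inherited from the restriction of $\C$ to $U$ agree with the global ones on points of $\gamma$.

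The technical core of the proof is the Lorentz--Finsler analogue of the classical local description of the chronological future in a convex neighborhood, namely
\begin{equation*}
I_U^+(p)=\exp_p(A_p\cap V_p),\qquad \partial I_U^+(p)\setminus\{p\}=\exp_p(\C_p\cap V_p),\quad p\in U.
\end{equation*}
Granting this, the implication ``lightlike pregeodesic $\Rightarrow$ cone geodesic'' is straightforward: if $\gamma$ is a lightlike pregeodesic of $L$, then after reparametrization it coincides locally with $\exp_{\gamma(t_0)}$ of a radial half-line in $\C_{\gamma(t_0)}$, and hence its image lies in $\partial I_U^+(\gamma(t_0))$; applying the same description at every earlier point $\gamma(t_1)$ yields $\gamma(t_2)\in J_U^+(\gamma(t_1))\setminus I_U^+(\gamma(t_1))$ for $t_1<t_2$ close to $t_0$, i.e., $\gamma(t_1)\to_U\gamma(t_2)$.

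For the converse, since cone geodesics are only required to be continuous, I would first use the horismotic condition at $\gamma(t_0)$ to confine the image of $\gamma$, locally, to $\partial I_U^+(\gamma(t_0))$; by the exponential-map identification, this boundary is the disjoint union of the lightlike geodesics of $L$ issuing from $\gamma(t_0)$, and uniqueness of such a geodesic reaching each target point of $U$ combined with continuity of $\gamma$ forces $\gamma$ to coincide with a single such geodesic up to reparametrization, so $\gamma$ is in particular smooth and a lightlike pregeodesic of $L$. The main obstacle, and the real technical backbone of the entire argument, is to establish the displayed exponential-map characterization of $I_U^+(p)$ and $\partial I_U^+(p)$ in the Lorentz--Finsler setting: one must leverage the strong convexity of the cones together with the smooth extension of the fundamental tensor to $\C$, via a Gauss-lemma-type computation, to show that $\exp_p(\C_p\cap V_p)$ is a smooth hypersurface cleanly separating the timelike image $\exp_p(A_p\cap V_p)$ from its complement in $U$.
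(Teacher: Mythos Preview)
The paper does not actually prove this theorem: it is quoted from \cite[Thm.~6.6]{JS20} and stated without proof in the present text, so there is no in-paper argument to compare your proposal against.

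That said, your outline is the right shape for such a proof and is essentially how the result in \cite{JS20} is obtained (convex normal neighborhoods for the spray of $L$, a Gauss-lemma computation giving the exponential description of $I_U^+(p)$ and $\partial I_U^+(p)$, and then reading off both implications). One point in your converse direction deserves tightening: you write that ``uniqueness of such a geodesic reaching each target point of $U$ combined with continuity of $\gamma$ forces $\gamma$ to coincide with a single such geodesic''. Continuity alone does not do this---a merely continuous curve can wander over the hypersurface $\partial I_U^+(\gamma(t_0))$, passing from one lightlike generator to another, without ever being a pregeodesic. What actually pins $\gamma$ to a single generator is the horismotic condition between \emph{all} pairs $t_1<t_2$ in $I_\varepsilon$, together with the standard ``broken null geodesic'' fact in a convex neighborhood: if $\gamma(t_0)\to_U\gamma(t_1)\to_U\gamma(t_2)$ and the three points do not lie on a common lightlike geodesic of $L$, then $\gamma(t_2)\in I_U^+(\gamma(t_0))$, contradicting $\gamma(t_0)\to_U\gamma(t_2)$. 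You should make that step explicit; once it is in place, your sketch is complete.
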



\section{Lightlike hypersurfaces}
\label{sec:light_hyp}
Our goal in this section is to transfer the notion and properties of lightlike hypersurfaces, very well known in Lorentzian geometry (see \cite{G,O}), to the more general setting of cone structures. In what follows, $ M $ will denote a smooth manifold of arbitrary dimension $ m=n+1 \geq 3 $ endowed with a cone structure $ \mathcal{C} $, and $ L: A^* \rightarrow \R $ will be any Lorentz-Finsler metric compatible with $ \mathcal{C} $.

\begin{defi}
\label{def:lig_hyp}
A hypersurface $ H $ of $ M $ is {\em lightlike} if at each $ p \in H $ there exists a lightlike vector $ v \in T_pH $ such that $ T_v\mathcal{C}_p = T_pH $. In this case, we call the direction defined by $ v $ a {\em degenerate direction} of $ H $ at $ p $ (this direction will be proven unique below).
\end{defi}

Although the previous definition is expressed in terms of the cone structure $ \mathcal{C} $, the following result gives a characterization that uses the Lorentz-Finsler metric $ L $ and resembles the Lorentzian definition  (see, e.g., \cite[Def. 6.1]{G}). 
\begin{prop}
\label{prop:lig_hyp}
A hypersurface $ H $ is lightlike if and only if at each $ p \in H $ there exists $ v \in T_pH $ such that $ v $ is $ L $-orthogonal to $ u $, denoted $ v \bot_L u $, for all $ u \in T_pH $, i.e.,
\begin{equation*}
g_v(v,u) = \left. \frac{1}{2}\frac{\partial}{\partial \delta} L(v+\delta u)\right\rvert_{\delta = 0} = 0, \quad \forall u \in T_pH.
\end{equation*}
Moreover, the direction of $ v $ is a degenerate direction of $ H $ at $ p $ and the following properties hold:
\begin{enumerate}[(i)]
\item $ g_v|_{T_pH} $ is negative semi-definite, being the direction of $ v $ the only degenerate direction.

\item $ [v]^{\bot_L} \coloneqq \lbrace u \in T_pM: v \bot_L u \rbrace = T_pH $.

\item Every (nonzero) vector non-proportional to $ v $ is spacelike.
\end{enumerate}
\end{prop}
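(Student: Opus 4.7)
The plan is to reduce everything to one identity linking the tangent space of the lightcone to $L$-orthogonality, and then to read off the biconditional and properties (i)--(iii) from it. By the two-homogeneity of $L$, Euler's identity applied to the $1$-homogeneous function $\partial_j L$ gives $(dL|_v)(u) = 2\, g_v(v,u)$ for every $v\in A^*$ and $u\in T_pM$; this is the same formula displayed in the statement. Since $\mathcal{C}_p$ is the zero-level set of $L$ in $A^*$, and $dL|_v \neq 0$ at any lightlike $v$ (because non-degeneracy of $g_v$ and $v\neq 0$ force $g_v(v,\cdot)\not\equiv 0$), we obtain the crucial identification
\[
T_v\mathcal{C}_p \;=\; \ker(dL|_v) \;=\; [v]^{\bot_L}
\]
for every lightlike $v\in\mathcal{C}_p$. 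Everything else in the statement will follow from this.

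From this, the biconditional is immediate. If $H$ is lightlike, Definition \ref{def:lig_hyp} yields a lightlike $v\in T_pH$ with $T_v\mathcal{C}_p = T_pH$, which by the identification is exactly $g_v(v,u) = 0$ for all $u\in T_pH$. Conversely, given a nonzero $v\in T_pH$ with this orthogonality, setting $u=v$ gives $L(v) = g_v(v,v) = 0$, so $v\in\mathcal{C}_p$ by the sign conventions on $A^*$, i.e.\ $v$ is lightlike. Then $T_pH\subseteq [v]^{\bot_L}$, and as both subspaces are $n$-dimensional (the latter because $g_v(v,\cdot)$ is a nonzero linear functional) the inclusion is equality; applying the key identification backwards yields $T_v\mathcal{C}_p = T_pH$, so $H$ is lightlike with $[v]$ as a degenerate direction.

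For properties (i)--(iii), I would argue as follows. Since $g_v$ has index $n$ on the $(n+1)$-dimensional space $T_pM$ and $v$ is $g_v$-null, picking any $T\in T_pM$ with $g_v(T,v)\neq 0$ and decomposing $[v]^{\bot_L} = \langle v\rangle \oplus (\langle v, T\rangle)^{\bot_L}$ shows that $g_v|_{[v]^{\bot_L}}$ is negative semi-definite with radical exactly $\langle v\rangle$; this is (i). Property (ii) is a restatement of the biconditional together with the preceding dimension count. For (iii), $T_pH = T_v\mathcal{C}_p$ is a supporting hyperplane to the convex cone $\overline{A_p}$ at $v$, and the strong-convexity condition in Definition \ref{def:cone} (whose radical is $\langle v\rangle$) forces $T_pH\cap\mathcal{C}_p$ to collapse to the single ray $\{\lambda v:\lambda>0\}$; since $T_pH$ is a linear subspace, salience then rules out any causal vector opposite to $v$ as well. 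Therefore any nonzero $w\in T_pH$ not proportional to $v$ is spacelike, and the uniqueness of the degenerate direction follows at once, since any other lightlike vector in $T_pH$ would have to be causal and hence proportional to $v$.

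The main obstacle should be the convex-analytic step in (iii): translating the infinitesimal strong convexity in Definition \ref{def:cone} into the global conclusion that $T_v\mathcal{C}_p \cap \overline{A_p}$ is exactly the ray $\{\lambda v : \lambda\geq 0\}$. In the classical Lorentzian setting this is just the tangency of a plane to the null cone along a single null line, but in the cone-structure framework one has to extract it from the convex geometry of $\overline{A_p}$: the tangent hyperplane at a strongly convex boundary point is a supporting hyperplane whose face of contact has the dimension of the radical of the second fundamental form -- namely one.
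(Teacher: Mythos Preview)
Your argument is correct and follows essentially the same route as the paper's: both hinge on the identification $T_v\mathcal{C}_p=[v]^{\bot_L}$ for lightlike $v$, and then read off the biconditional via a dimension count. The only difference is packaging---the paper cites this identification and properties (i)--(iii) directly from \cite[Prop.~3.4]{JS20}, whereas you derive them yourself (Euler's identity plus the level-set description of $\mathcal{C}_p$ for the key identity, signature bookkeeping for (i), a supporting-hyperplane/strong-convexity argument for (iii))---so your version is more self-contained but otherwise identical in strategy.
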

\begin{proof}
Let $ H $ be a lightlike hypersurface with a degenerate direction $ v \in T_pH $, so that $ T_pH = T_v\mathcal{C}_p $. Since $ v $ is lightlike, $ [v]^{\bot_L} = T_v\mathcal{C}_p $ (see \cite[Prop. 3.4 (iii)]{JS20}), so $ g_v(v,u) = 0 $ for all $ u \in T_v\mathcal{C}_p = T_pH $. Conversely, if there exists  $ v \in T_p H $ such that $ v \bot_L u $ for all $ u \in T_pH $, this means that $ T_pH \subset [v]^{\bot_L} $ and also $ L(v) = g_v(v,v) = 0 $, so $ v $ must be lightlike and thus $ [v]^{\bot_L} = T_v\mathcal{C}_p $. Since $ \text{dim}(T_pH)=\text{dim}(T_v\mathcal{C}_p) $, we conclude that $ [v]^{\bot_L} = T_pH = T_v\mathcal{C}_p $, i.e., $ H $ is a lightlike hypersurface, being the direction of $ v $ a degenerate direction. Finally, the claims (i), (ii) and (iii) are straightforward from \cite[Prop. 3.4]{JS20}.


\qed
\end{proof}

 The next results state some properties of lightlike hypersurfaces that will prove useful later on. Here we follow the Lorentzian results in \cite[\S 6]{G}, providing self-contained proofs to adapt them to our setting. 

\begin{cor}
\label{cor:N_unique}
Every lightlike hypersurface $ H $ determines a smooth lightlike vector field $ N $ on $ H $, unique up to a positive pointwise scale factor. Such an $ N $ will be said {\em associated with the lightlike hypersurface}.
\end{cor}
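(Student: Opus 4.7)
The plan is as follows. The pointwise content of the corollary is already contained in Proposition~\ref{prop:lig_hyp}(i), which asserts that at every $p\in H$ the degenerate direction is a \emph{unique} future-directed lightlike ray inside $T_pH$. Consequently, any two smooth nowhere-vanishing lightlike sections $N,N'$ of $TM|_H$ tangent to $H$ must satisfy $N'=\phi\,N$ with $\phi:H\to(0,\infty)$ smooth, which yields the uniqueness statement at once. It therefore remains to exhibit one such $N$.

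I would first construct $N$ locally. Near an arbitrary $p_0\in H$, write $H\cap U=f^{-1}(0)$ for a smooth submersion $f$ on a neighbourhood $U$, and fix an auxiliary Riemannian metric $g^R$ on $U$. Using Proposition~\ref{prop:lig_hyp}, a vector $v\in T_qM$ represents the degenerate ray at $q\in H\cap U$ iff $L(q,v)=0$ and the 1-form $u\mapsto g_v(v,u)=\tfrac12\, dL_q|_v(u)$ on $T_qM$ is proportional to $df_q$ (equivalently, both have $T_qH$ as kernel). Adding the normalization $g^R(v,v)=1$ picks out a unique vector on each ray, so the search for $N$ reduces to a smooth system $F(q,v)=0$ with unknown $v\in T_qM$, to which I would apply the implicit function theorem at the known solution $(p_0,v_0)$.

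The main obstacle is checking that the partial differential of $F$ in the fibre variable $v$ is an isomorphism at $(p_0,v_0)$. This is precisely where the strong convexity axiom of Definition~\ref{def:cone}(d) is essential: translated through the identification $T_{v_0}\C_{p_0}=[v_0]^{\bot_L}$, it says that $g_{v_0}|_{T_{p_0}H}$ is negative semi-definite with radical equal to the radial line alone, so the only infinitesimal deformations of $v_0$ inside $\C_{p_0}$ preserving tangency to $T_{p_0}H$ are along the ray itself; this residual one-parameter ambiguity is then killed by the Riemannian normalization, yielding the required invertibility. Granted the implicit function theorem, this produces a smooth local lightlike section $N^\alpha$ on a neighbourhood $U_\alpha$ of every point of $H$. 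Finally, choosing a locally finite cover $\{U_\alpha\}$ of $H$ with a subordinate smooth partition of unity $\{\chi_\alpha\}$, the vector field $N:=\sum_\alpha \chi_\alpha N^\alpha$ is smooth and, at each $p\in H$, is a positive linear combination of positive multiples of the same degenerate vector; hence $N$ is nowhere vanishing, lightlike, tangent to $H$, and unique up to a positive smooth factor, as claimed.
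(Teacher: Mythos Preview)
Your proof is correct and follows the same essential strategy as the paper: uniqueness from Proposition~\ref{prop:lig_hyp}(i), and smoothness from an implicit-function/regular-value argument whose nondegeneracy check is precisely the strong convexity of the cone (equivalently, that $g_{N_p}|_{T_pH}$ has radical exactly $\R N_p$). The packaging differs in two respects. First, the paper works on $TH$ rather than $TM|_H$: it fixes a local spacelike frame $X^1,\ldots,X^{n-1}$ of $TH$ and exhibits $N|_U$ as the preimage of the regular value $(0,\ldots,0,1)$ under $(p,u)\mapsto\bigl(g_u(u,X^1_p),\ldots,g_u(u,X^{n-1}_p),\Omega(u)\bigr)$, which is a square system from the outset. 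In your setup on $TM|_H$ the equation $L(v)=0$ is actually redundant near the solution---the $n$ orthogonality conditions $g_v(v,\cdot)|_{T_qH}=0$ already cut out the ray $\R_{>0}v_0$, on which $L$ vanishes---so to apply the implicit function theorem cleanly you should either drop it or observe that the system has constant rank $n+1$. Second, the paper normalizes via a \emph{global} timelike one-form $\Omega$ (setting $\Omega(N)=1$) rather than a local Riemannian metric; this makes the locally constructed fields agree on overlaps automatically, so no partition-of-unity gluing is needed. Your gluing argument is valid (a convex combination of positive multiples of the same lightlike ray is again such a multiple), but the global normalization is the tidier route.
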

\begin{proof}
At each point $ p \in H $ there is a unique degenerate direction (Prop. \ref{prop:lig_hyp}(i)), which yields a unique lightlike vector $ v \in T_pH $ when normalized to $ \Omega(v)=1 $, being $ \Omega $ any timelike one-form, i.e., $ \Omega(u) > 0 $ for any causal vector $ u $ (such an $ \Omega $ can always be found; see \cite[Lem. 2.15]{JS20}). Therefore, $ N $ can be constructed by taking $ N_p \coloneqq v $ at each point $ p \in H $.

It remains to show that $ N $ is smooth. Obviously it suffices to prove it locally, since this is a local property. Let $ U \subset H $ be any open subset. Reducing $ U $ if necessary, we can assume that there exists a (smooth) spacelike frame $ \lbrace X^1,\ldots,X^{n-1} \rbrace $ on $ U $ such that at each $ p \in U $, $ \lbrace X^1_p,\ldots,X^{n-1}_p,N_p \rbrace $ is a basis of $ T_pH $. Define the map
\begin{equation*}
\begin{array}{cccc}
f \colon & TU \cap A^* \subset TH & \longrightarrow & \R^n\\
& (p,u) & \longmapsto & f(p,u) \coloneqq (g_u(u,X^1_p),\ldots,g_u(u,X^{n-1}_p),\Omega(u))
\end{array}
\end{equation*}
and note that $ f^{-1}(0,\ldots,0,1) = N|_U $. Its differential $ \df f_{(p,u)}: T_{(p,u)}TH \equiv T_pH \oplus T_u(T_pH) \rightarrow \R^n $ is a linear map satisfying
\begin{equation*}
\begin{split}
\df f_{(p,u)}(0,w) = & (\df f_p)_u(w) = \frac{\partial}{\partial\delta}f_p(u+\delta w)|_{\delta=0} = \\
= & (g_u(w,X^1_p),\ldots,g_u(w,X^{n-1}_p),\Omega(w))
\end{split}
\end{equation*}
for every $ p \in U $, $ u \in T_pH \cap A^*_p $, $ w \in T_u(T_pH) \equiv T_pH $. In particular, $ g_{N_p} $ is a (negative definite) scalar product on the subspace generated by $ \lbrace X_p^{i} \rbrace_{i=1}^{n-1} $ and thus $ \df f_{N_p}(0,X_p^1),\ldots,\df f_{N_p}(0,X_p^{n-1}),\df f_{N_p}(0,N_p) $ are linearly independent vectors. Indeed, if there exist $ \lambda_1,\ldots,\lambda_n \in \R $ such that
\begin{equation*}
\left\lbrace{
\begin{array}{l}
\sum_{i=1}^{n-1} \lambda_i g_{N_p}(X_p^i,X_p^j) + \lambda_n g_{N_p}(N_p,X_p^j) = 0, \quad \forall j=1,\ldots,n-1, \\
\sum_{i=1}^{n-1} \lambda_i \Omega(X_p^i) + \lambda_n \Omega(N_p) = 0,
\end{array}
}\right.
\end{equation*}
then from the first equation we obtain $ g_{N_p}(\sum_{i=1}^{n-1}\lambda_i X_p^i,X_p^j) = 0 $ for all $ j=1,\ldots,n-1 $, which means that $ \lambda_1 = \ldots = \lambda_{n-1} = 0 $ as $ g_{N_p} $ is non-degenerate on the subspace generated by $ \lbrace X_p^{i} \rbrace_{i=1}^{n-1} $. Also, since $ \Omega(N_p) = 1 $, from the second equation we obtain $ \lambda_n = 0 $. Therefore, we conclude that $ \df f_{N_p} $ is surjective for every $ p \in U $, i.e., $ (0,\ldots,0,1) $ is a regular value of $ f $ and thus $ N|_U $ is smooth.
\qed
\end{proof}


\begin{prop}
\label{prop:int_cur_geod}
Let $ H $ be a lightlike hypersurface and $ N $ its associated vector field. Then the integral curves of $ N $ are cone geodesics.
\end{prop}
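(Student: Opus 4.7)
The plan is to apply Theorem~\ref{th:cone_geod}, so it suffices to show that every integral curve $\gamma$ of $N$ is a lightlike pregeodesic for some Lorentz-Finsler metric $L$ compatible with $\mathcal{C}$. Since $N$ is tangent to $H$, any such $\gamma$ stays inside $H$, and $\gamma'=N$ is lightlike along it by construction. I would then work with a classical anisotropic connection (say, the Chern connection) $\nabla^V$ of $L$ with reference vector $V=N$, and prove the pregeodesic equation $\nabla^N_N N = \alpha N$ along $\gamma$ for some smooth function $\alpha$.

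The main computation parallels the standard Lorentzian argument for lightlike hypersurfaces. Extending $N$ smoothly off $H$ in a neighborhood of $\gamma$ and choosing any vector field $X$ tangent to $H$, I would use torsion-freeness and the compatibility of $\nabla^V$ with $g_V$ in the reference direction to expand
\begin{equation*}
0 = N\bigl(g_N(N,X)\bigr) = g_N(\nabla^N_N N, X) + g_N(N, \nabla^N_X N) + g_N(N, [N,X]),
\end{equation*}
where the vanishing of the left-hand side comes from $N\perp_L T_pH$ (Prop.~\ref{prop:lig_hyp}). The term $g_N(N,\nabla^N_X N)$ equals $\tfrac12 X(L(N))$, which is zero because $L(N)=0$ on $H$ and $X$ is tangent to $H$; and $g_N(N,[N,X])=0$ because $[N,X]\in TH$ (both $N$ and $X$ are tangent to $H$) and $N\perp_L TH$. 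Hence $g_N(\nabla^N_N N, X)=0$ for all $X\in T_pH$, placing $\nabla^N_N N$ in the $g_N$-orthogonal complement of $T_pH=[N]^{\perp_L}$. Since $g_N$ is non-degenerate of index $n$ on $T_pM$, this complement is the $1$-dimensional line spanned by $N$, so $\nabla^N_N N=\alpha N$, giving the pregeodesic property.

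The principal obstacle will be the anisotropic nature of the fundamental tensor: neither the metric-compatibility step $N(g_N(N,X))=g_N(\nabla^N_N N, X)+g_N(N, \nabla^N_N X)$ nor the identity $X(L(N))=2g_N(\nabla^N_X N, N)$ is a literal instance of pseudo-Riemannian metric compatibility. Both are valid for the Chern connection thanks to the $0$-homogeneity of $g_V$ in its reference vector $V$, which kills the Cartan tensor terms $C_V(V,\cdot,\cdot)$ along the reference direction. I would briefly invoke this standard property (citing the relevant Lorentz-Finsler literature) and then the rest of the argument reduces to the Lorentzian template described above.
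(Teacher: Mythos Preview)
Your proposal is correct and follows essentially the same route as the paper's proof: reduce via Thm.~\ref{th:cone_geod} to showing $\nabla^N_N N \parallel N$ for the Chern connection, differentiate $g_N(N,X)=0$ for $X\in TH$, and use $X(g_N(N,N))=0$ to kill the remaining term. The only cosmetic difference is that the paper picks $X$ flow-invariant under $N$ so that $[N,X]=0$ from the outset, whereas you keep a general $X\in\mathfrak X(H)$ and dispose of $g_N(N,[N,X])$ separately via $[N,X]\in TH\subset [N]^{\perp_L}$; both variants are equally valid, and your remark about the vanishing Cartan term $C_N(N,\cdot,\cdot)$ is exactly the justification the anisotropic compatibility step needs.
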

\begin{proof}
Since cone geodesics are lightlike pregeodesics of $ L $ (recall Thm. \ref{th:cone_geod}), it suffices to show that $ \nabla_{N}^{N} N = \lambda N $ for some pointwise scale factor (i.e., real function) $ \lambda $, being $ \nabla $ the Chern connection of $ (M,L) $ (considered as a family of affine connections, as in \cite{J13}). Fix $ p \in H $ and choose any $ X_p \in T_pH $, so that $ g_{N_p}(N_p,X_p) = 0 $. We can extend $ X_p $ to a vector field $ X $ by making it invariant under the flow of $ N $, i.e.,
\begin{equation*}
[N,X] \coloneqq \nabla_{N}^{N} X - \nabla_{X}^{N} N = 0.
\end{equation*}
$ X $ remains tangent to $ H $, so along the flow line through $ p $, $ g_N(N,X) = 0 $. Differentiating we obtain
\begin{equation*}
0 = N(g_N(N,X)) = g_N(\nabla_{N}^{N} N,X) + g_N(N,\nabla_{N}^{N} X).
\end{equation*}
Rearranging and noting that $ \nabla_{N}^{N} X = \nabla_{X}^{N} N $ and $ g_N(N,N) = 0 $:
\begin{equation*}
g_N(\nabla_{N}^{N} N,X) = - g_N(N,\nabla_{X}^{N} N) = -\frac{1}{2}X(g_N(N,N)) = 0.
\end{equation*}
Hence $ g_{N_p}(\nabla_{N_p}^{N} N,X_p) = 0 $ for all $ X_p \in T_pH $, but since the direction of $ N_p $ is the only degenerate direction of $ H $ at $ p $, we conclude that $ \nabla_{N}^{N} N $ must be proportional to $ N $ at each point.
\qed
\end{proof}


\begin{prop}
\label{prop:tra_int}
Let $ \Sigma $ be the intersection of a lightlike hypersurface $ H $ (with associated vector field $ N $) with a hypersurface $ D $ of $ M $ which is transverse to $ N $ at every point $ p \in \Sigma $. Then $ \Sigma $ is a co-dimension two spacelike\footnote{Namely, every tangent vector is spacelike.} submanifold of $ M $, along which $ N $ is $ L $-orthogonal.
\end{prop}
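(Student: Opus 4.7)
The plan is to unpack the three conclusions (submanifold structure, spacelikeness, and $L$-orthogonality of $N$) and reduce each of them to an item of Proposition \ref{prop:lig_hyp} combined with the transversality hypothesis on $N$ and $D$.

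First I would establish that $\Sigma$ is a codimension-two submanifold by checking that $H$ and $D$ intersect transversally at every $p\in\Sigma$. Since $N$ is tangent to $H$ by construction (so $N_p\in T_pH$) and $D$ is transverse to $N$ at $p$ (so $N_p\notin T_pD$), the hyperplane $T_pH$ contains a vector not lying in the hyperplane $T_pD$; hence $T_pH+T_pD=T_pM$, which gives that $\Sigma=H\cap D$ is a smooth submanifold of codimension two with $T_p\Sigma=T_pH\cap T_pD$.

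Next I would verify the spacelike character. By Proposition \ref{prop:lig_hyp}(iii), every nonzero vector of $T_pH$ that is not proportional to $N_p$ is spacelike. Since $N_p\notin T_pD$ and $T_p\Sigma\subset T_pD$, no nonzero vector of $T_p\Sigma$ can be a multiple of $N_p$; consequently every such vector lies in $T_pH$ and is non-radial, hence spacelike.

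Finally, the $L$-orthogonality of $N$ to $\Sigma$ is immediate from Proposition \ref{prop:lig_hyp}(ii): one has $[N_p]^{\bot_L}=T_pH$, so for any $w\in T_p\Sigma\subset T_pH$ the relation $g_{N_p}(N_p,w)=0$ holds. There is no real obstacle in the argument; the only subtlety is to use the transversality of $D$ with $N$ (as opposed to mere transversality with $H$) to guarantee that the degenerate direction is excluded from $T_p\Sigma$, which is precisely what both the spacelike conclusion and the non-triviality of the orthogonality statement rely on.
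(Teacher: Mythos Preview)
Your argument is correct and follows essentially the same route as the paper: use $N_p\in T_pH\setminus T_pD$ to obtain $T_pH+T_pD=T_pM$, hence $\Sigma=H\cap D$ is a codimension-two submanifold with $N_p\notin T_p\Sigma$, and then invoke Proposition~\ref{prop:lig_hyp}(ii)--(iii) for the spacelike and $L$-orthogonality conclusions. The paper's proof is just a condensed version of yours.
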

\begin{proof}
$ D $ is transverse to $ N \in \mathfrak{X}(H) $ ($ N_p \notin T_pD $ for all $ p \in \Sigma $), so $ D $ is also transverse to $ H $ and by the transversality theorem, $ T_pD + T_pH = T_pM $ for all $ p \in \Sigma $. This ensures that $ \Sigma = H \cap D $ is a co-dimension two submanifold of $ M $ transverse to $ N $ and thus, by Prop. \ref{prop:lig_hyp}, spacelike and with $ N $ being $ L $-orthogonal to it (recall the definition of $ N $ in Cor. \ref{cor:N_unique}).
\qed
\end{proof}

\section{Smoothness of achronal boundaries}
\label{sec:smooth}
It is a standard result that, in general, achronal boundaries are (non-smooth) topological hypersurfaces (see \cite[Cor. 14.27]{O} for the Lorentzian result and \cite[Thm. 2.19]{Min} for its translation to cone structures). Here we will show that the smoothness can be guaranteed under certain conditions.

One of such conditions is the global hyperbolicity of the spacetime. This causality condition is very well known in Lorentzian geometry (see \cite{BEE,O} for background) and can be extended naturally to cone structures \cite{FS,JS14,Min}. So, from now on we will assume that $ M $ (endowed with the cone structure $ \mathcal{C} $) is globally hyperbolic. In particular, $ M $ admits a {\em Cauchy temporal function}, i.e., a surjective temporal function $ t: M \rightarrow \R $ (recall Def. \ref{def:causality}) that all its levels $ \lbrace t=t_0 \rbrace \coloneqq t^{-1}(t_0) $, $ t_0 \in \R $, are (necessarily spacelike) Cauchy hypersurfaces. This means that $ M $ is topologically a product $ \R \times D $, being $ t: \R \times D \rightarrow \R $ the natural projection on $ \R $ and $ \lbrace t=t_0 \rbrace = \{t_0\} \times D $ (see \cite{FS,Min}). The natural projection on $ D $ will be denoted by $ \pi: \R \times D \rightarrow D $.

In addition, in what follows $ S $ will denote a compact hypersurface with boundary of $ M $ included in $ \{t=0\} $. With abuse of notation, we will also denote by $ S $ its corresponding projection $ \pi(S) $ on $ D $. This way, $ \partial S $ is a co-dimension two spacelike submanifold of $ M $ and at each $ p \in \partial S $ there are exactly two lightlike directions $ L $-orthogonal to $ \partial S $ (see \cite[Prop. 5.2]{AJ}). For simplicity, we will assume that $ \partial/\partial t $ is timelike, so that one lightlike direction always points outwards from $ S $ and the other, inwards.\footnote{The timelike character of $ \partial/\partial t $ is useful to make this distinction between both lightlike directions, but it is not strictly necessary in most of the results (see the discussion in Rem. \ref{rem:partial_t} below).} Recall that $ L: A^* \rightarrow \R $ is any Lorentz-Finsler metric compatible with $ \mathcal{C} $, and consider its exponential map $ \text{exp} $.


\begin{lem}
\label{lem:H_hip}
Let $ N $ be a smooth causal vector field on $ \partial S $. There exists $ \varepsilon > 0 $ such that $ H \coloneqq \textup{exp}(\hat{H}_{\varepsilon}) $ is a (smooth) hypersurface of $ M $, where
\begin{equation*}
\hat{H}_{\varepsilon} \coloneqq \lbrace (p,\tau N_p) \in TM: p \in \partial S, \tau \in (-\varepsilon,\varepsilon) \rbrace.
\end{equation*}
\end{lem}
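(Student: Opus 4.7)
The plan is to realise $H$ as the embedded image of a smooth $(m-1)$-dimensional parameter space under $\exp$. Define
\[
\Phi : \partial S \times (-\varepsilon_0,\varepsilon_0) \longrightarrow TM, \qquad \Phi(p,\tau)=(p,\tau N_p),
\]
where $\varepsilon_0>0$ is chosen, using compactness of $\partial S$ and smoothness of $N$, so that $\tau N_p$ lies in the domain of the Lorentz--Finsler exponential map of $L$ for every $(p,\tau)$. Since $N$ is smooth and nowhere zero and $\partial S$ has dimension $m-2$, $\Phi$ is a smooth embedding whose image is $\hat H_{\varepsilon_0}$. Writing $F_\varepsilon := \exp\circ\Phi$, it then suffices to show that, for some $\varepsilon\in(0,\varepsilon_0)$, the restriction $F_\varepsilon$ is a smooth embedding; its image is precisely $H$.

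First I would verify the immersion property at the zero section. Using the canonical splitting $T_{(p,0)}(TM)\cong T_pM\oplus T_pM$, one has $d\exp_{(p,0)}(u,w)=u+w$, and consequently
\[
dF_\varepsilon|_{(p,0)}(X,\lambda\,\partial_\tau)=X+\lambda N_p,\qquad X\in T_p\partial S,\ \lambda\in\R.
\]
Because $\partial S$ is spacelike, no causal vector lies in $T_p\partial S$; in particular $N_p\notin T_p\partial S$, so the image of $dF_\varepsilon|_{(p,0)}$ is $(m-1)$-dimensional. Hence $F_\varepsilon$ is an immersion along the whole of $\partial S\times\{0\}$. Since the set of points where the differential has maximal rank is open and contains the compact set $\partial S\times\{0\}$, there exists $\varepsilon_1\in(0,\varepsilon_0)$ such that $F_{\varepsilon_1}$ is an immersion on the entire slab $\partial S\times(-\varepsilon_1,\varepsilon_1)$.

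It remains to upgrade this immersion to an embedding; this is the step I expect to be the main obstacle, since the immersion condition is pointwise but injectivity is global. The map $F_{\varepsilon_1}$ is a local diffeomorphism around each $(p,0)$ and restricts to the identity inclusion on $\partial S\times\{0\}$, so every point of $\partial S$ admits an open neighbourhood of the form $U_p\times(-\delta_p,\delta_p)$ on which $F_{\varepsilon_1}$ is injective. Covering the compact set $\partial S\times\{0\}$ by finitely many such neighbourhoods and invoking a standard Lebesgue-number argument rules out pairs of distant points being identified for small $|\tau|$, yielding an $\varepsilon\in(0,\varepsilon_1)$ for which $F_\varepsilon$ is injective on all of $\partial S\times(-\varepsilon,\varepsilon)$. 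Combined with compactness of $\partial S$, one checks that $F_\varepsilon$ is then a homeomorphism onto its image, hence a smooth embedding, which exhibits $H=F_\varepsilon(\partial S\times(-\varepsilon,\varepsilon))$ as a smooth embedded hypersurface of $M$.
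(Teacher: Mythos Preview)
Your proof is correct and follows essentially the same route as the paper: verify that the map is an immersion along $\partial S\times\{0\}$ (using that $N_p\notin T_p\partial S$), extend to a slab by openness and compactness, and then upgrade to an embedding by a compactness argument ruling out self-intersections as $\varepsilon\to 0$. The paper carries out the injectivity step via the explicit sequence-contradiction argument rather than invoking a Lebesgue-number formulation, and it pauses to note that although the Finsler exponential is not smooth on the zero section in general, the restricted map (your $F_\varepsilon$) is smooth because only the single direction $N_p$ is used---a point your parametrisation handles implicitly.
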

\begin{proof}
We will prove that there exists such an $ \varepsilon > 0 $ that makes $ \text{exp}|_{\hat{H}_{\varepsilon}} $ an embedding. First, choose $ \varepsilon_1 > 0 $ small enough for $ \hat{H}_{\varepsilon_1} $ to be included in the domain of definition of the exponential map.\footnote{This domain $ U \subset TM $ satisfies that $ U \setminus \textbf{0} $ is open and each $ U_p \coloneqq U \cap T_pM $ is star-shaped and contains points in all the causal directions.} We denote the zero section on $ \partial S $ as $ \partial\hat{S} \coloneqq \lbrace (p,0) \in TM / p \in \partial S \rbrace \subset TM $, which is a hypersurface of $ \hat{H}_{\varepsilon_1} $, which is in turn a submanifold of $ TM $. In general, $ \text{exp} $ is not smooth on the zero section, but here we will only work with its restriction to $ \hat{H}_{\varepsilon_1} $, which contains just one direction for each point, making $ \text{exp}|_{\hat{H}_{\varepsilon_1}} $ smooth. This map satisfies:
\begin{equation*}
\begin{array}{cccc}
\text{exp}|_{\hat{H}_{\varepsilon_1}} \colon & \hat{H}_{\varepsilon_1} \subset TM & \longrightarrow & M\\
& (p,v) & \longmapsto & \text{exp}_p(v)\\
& (p,0) & \longmapsto & p.
\end{array}
\end{equation*}
Also, its differential map on any $ (p,v) \in \hat{H}_{\varepsilon_1} $, whose domain of definition is $ T_{(p,v)}\hat{H}_{\varepsilon_1} \equiv T_p\partial S \oplus \text{Span}(N_p) $, satisfies:
\begin{equation*}
\begin{array}{cccc}
\df(\text{exp}|_{\hat{H}_{\varepsilon_1}})_{(p,v)} \colon & T_p\partial S \oplus \text{Span}(N_p) & \longrightarrow & T_{\text{exp}_p(v)}M\\
& (0,\tau N_p) & \longmapsto & \tau \df(\text{exp}_p)_{v}(N_p)\\
& (u,0) & \longmapsto & u.
\end{array}
\end{equation*}
Clearly $ \text{exp}|_{\partial\hat{S}} $ is an embedding, as $ \text{exp}|_{\partial\hat{S}} $ and $ \df(\text{exp}|_{\partial\hat{S}})_{(p,0)} $ are the identity maps. In particular, $ \df(\text{exp}|_{\partial\hat{S}})_{(p,0)} $ is injective at every $ (p,0) \in \partial\hat{S} $, so there must exist an open neighborhood $ \hat{V} \subset \hat{H}_{\varepsilon_1} $ of $ \partial\hat{S} $ such that, for all $ (p,v) \in \hat{V} $, $ \df({\text{exp}|_{\hat{V}}})_{(p,v)} $ is still injective. Reducing $ \varepsilon_1 $ if necessary, we can assume that $ \hat{V} = \hat{H}_{\varepsilon_2} $ for some $ \varepsilon_2 \leq \varepsilon_1 $ (thanks to the compactness of $ \partial S $) and therefore, $ \text{exp}|_{\hat{H}_{\varepsilon_2}} $ is an inmersion. As every injective inmersion of a compact manifold (with or without boundary) is an embedding, it suffices to show that there exists a compact submanifold of $ \hat{H}_{\varepsilon_2} $ where the exponential map is injective. We define, for each $ n \in \N $,
\begin{equation*}
U_n \coloneqq \left\lbrace (p,\tau N_p) \in TM: p \in \partial S, \tau \in \left[ \frac{-1}{n},\frac{1}{n} \right] \right\rbrace,
\end{equation*}
which are compact submanifolds (with boundary) of $ \hat{H}_{\infty} $ that verify $ \partial\hat{S} = \cap_{n \in \N} U_n $. Obviously there exists $ n_1 \in \N $ such that $ U_n \subset \hat{H}_{\varepsilon_2} $ for all $ n \geq n_1 $, so we only need to prove that the restriction of the exponential map to one of these $ U_n $ is injective. Suppose $ \text{exp}|_{U_n} $ is not injective for all $ n \in \N $. Then there exist two sequences $ \lbrace h_n \rbrace_{n \in \N} $ and $ \lbrace h'_n \rbrace_{n \in \N} $, with $ h_n, h'_n \in U_n $ and $ h_n \not= h'_n $ for all $ n \in \N $, such that $ \text{exp}(h_n) = \text{exp}(h'_n) $. Taking subsequences if necessary, we can ensure that $ h_n \rightarrow h \in \partial\hat{S} $ and $ h'_n \rightarrow h' \in \partial\hat{S} $. By continuity, $ \text{exp}(h) = \text{exp}(h') $ and since $ \text{exp}|_{\partial\hat{S}} $ is an embedding, we conclude that $ h = h' $. Now, we know that $ \text{exp}|_{\hat{H}_{\varepsilon_2}} $ is an inmersion, and every inmersion is locally an embedding, so there exists a neighborhood $ U_h $ of $ h \in \partial\hat{S} \subset \hat{V} $ where $ \text{exp}|_{U_h} $ is injective. But the sequences $ \lbrace h_n \rbrace_{n \in \N} $ and $ \lbrace h'_n \rbrace_{n \in \N} $ must enter $ U_h $, which contradicts the injectivity of $ \text{exp}|_{U_h} $. We conclude then that there exists $ n_2 \geq n_1 $ such that $ \text{exp}|_{U_{n_2}} $ is injective and thus an embedding. To end the proof, we can choose $ \varepsilon \coloneqq 1/n_2 $, so that $ \hat{H}_{\varepsilon} = \text{Int}(U_{n_2}) $; $ \text{exp}|_{\hat{H}_{\varepsilon}} $ will also be an embedding and consequently, $ H \coloneqq \text{exp}(\hat{H}_{\varepsilon}) $ is a (smooth) hypersurface of $ M $.
\qed
\end{proof}

\begin{lem}
\label{lem:cone_geod}
Each $ p \in \partial I^+(S) \setminus S $ lies on a cone geodesic entirely contained in $ \partial I^+(S) $ that departs from $ \partial S $.
\end{lem}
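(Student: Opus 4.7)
My plan is to produce a candidate curve via a limit of timelike curves, verify it is a cone geodesic contained in $\partial I^+(S)$, and then rule out that its past endpoint lies in the interior of $S$. First, pick a sequence $p_n\in I^+(S)$ with $p_n\to p$ (possible since $p\in\partial I^+(S)\subset\overline{I^+(S)}$), and for each $n$ fix a future timelike curve $\sigma_n$ from some $q_n\in S$ to $p_n$. Using the global hyperbolicity of $M$ and the compactness of $S$ (so that $J^+(S)\cap J^-(K)$ is compact for any compact $K$ containing a neighborhood of $p$), the standard limit-curve lemma adapted to cone structures (see \cite{Min}) produces a continuous future-directed causal curve $\sigma:[0,1]\to M$ with $\sigma(0)=q\in S$ and $\sigma(1)=p$.

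Next I would check that $\sigma$ is an achronal cone geodesic entirely contained in $\partial I^+(S)$. If some $\sigma(s)\in I^+(S)$, concatenating a timelike curve from $S$ to $\sigma(s)$ with the causal segment $\sigma|_{[s,1]}$ would place $p$ in $I^+(S)$, contradicting $p\in\partial I^+(S)$; hence $\sigma\subset J^+(S)\setminus I^+(S)=\partial I^+(S)$. Analogously $\sigma$ is achronal, because any internal chronological jump would again push $p$ into $I^+(S)$. A causal, achronal curve satisfies the local horismoticity condition of Def.~\ref{def:cone_geod}, hence is a cone geodesic, and by Thm.~\ref{th:cone_geod} a smooth lightlike pregeodesic of $L$; write $v=\sigma'(0)$, a future lightlike vector at $q$.

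The main step is to show $q\in\partial S$. Suppose for contradiction $q\in\textup{int}(S)$; since $S\subset\{t=0\}$ has the same dimension $n$ as $\{t=0\}$, a full $\{t=0\}$-neighborhood of $q$ is contained in $S$, and $T_qS=T_q\{t=0\}$. For any $w\in T_qS$ choose a smooth curve $\epsilon\mapsto q_\epsilon\in S$ with $q_0=q$ and $\dot q_0=w$. In a local chart the displacement from $q_\epsilon$ to $\sigma(s)$ has expansion $sv-\epsilon w+O(s^2+\epsilon^2)$, and using $L(v)=0$ together with two-homogeneity of $L$ and $0$-homogeneity of $g_v$ in $v$ yields
\begin{equation*}
L(sv-\epsilon w)=-2s\epsilon\,g_v(v,w)+O(\epsilon^2).
\end{equation*}
If some $w\in T_qS$ has $g_v(v,w)\neq 0$, flip its sign so that this expression becomes positive; then the straight chart segment from $q_\epsilon$ to $\sigma(s)$ is timelike for small $s,\epsilon>0$, giving $\sigma(s)\in I^+(q_\epsilon)\subset I^+(S)$, contradicting the previous step. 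Thus $g_v(v,w)=0$ for every $w\in T_qS$, i.e.\ $T_qS\subset[v]^{\bot_L}$; since both subspaces have dimension $n$ they coincide, and in particular $v\in[v]^{\bot_L}=T_qS\subset T_q\{t=0\}$. But by Def.~\ref{def:causality} no causal vector is tangent to the slice of a temporal function, contradicting $v$ being lightlike. Therefore $q\in\partial S$.

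The main obstacle is precisely this last step, forcing the past endpoint of $\sigma$ to lie on $\partial S$ rather than in $\textup{int}(S)$: it rests on the first-order expansion of $L$ at a lightlike vector and crucially uses the transversality of the temporal function $t$ to causal vectors to rule out the degenerate tangency case $T_qS=[v]^{\bot_L}$. The limit-curve extraction and the "achronal causal implies cone geodesic" assertion in the first two steps are technically standard for cone structures and can be quoted from the literature.
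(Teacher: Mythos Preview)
Your proof is correct, and in essence it unpacks what the paper obtains by quoting \cite[Thm.~2.48]{Min} as a black box. Where the paper simply invokes Minguzzi's result to get a locally $\mathring{J}$-arelated causal curve from $S$ to $p$ inside $\partial I^+(S)$, you reproduce the same conclusion directly: the limit-curve extraction together with the observation that any causal curve landing at $p\in\partial I^+(S)$ must stay in $\partial I^+(S)$ (hence be achronal, hence locally horismotic) is exactly the mechanism underlying that theorem. So Steps~1--2 are not a different strategy, just a self-contained rederivation of the cited statement.

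The genuine difference is your Step~3. The paper disposes of the endpoint issue with the single phrase ``by continuity'', whereas you give an explicit first-variation argument: if $q\in\textup{int}(S)$ then either a nearby point of $S$ sees $\sigma(s)$ chronologically (forcing $\sigma(s)\in I^+(S)$), or $g_v(v,\cdot)$ annihilates all of $T_qS=T_q\{t=0\}$, which would make the lightlike $v$ tangent to a level set of the temporal function---impossible. This is essentially the content of \cite[Prop.~6.4]{AJ} specialized to the present situation, and it supplies the justification that the paper leaves implicit. What your approach buys is self-containment and a transparent reason why the curve must emanate from $\partial S$; what the paper's approach buys is brevity, at the cost of deferring the work to \cite{Min}. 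A minor point: in your expansion you should be explicit that the $O(\epsilon^2)$ constant is independent of $s$ (it is, since $g_{sv}=g_v$), and that the chart segment remains in $A^*$ and stays timelike along its whole length by openness of $A$ and smooth dependence of $\mathcal{C}_p$ on $p$; these are routine but worth stating.
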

\begin{proof}
This is a consequence of \cite[Thm. 2.48]{Min}, which guarantees that each $ p \in \partial I^+(S) \setminus S $ lies on a ``lightlike geodesic'' (defined as a locally $\mathring{J}$-arelated causal curve; see \cite[Def. 2.6]{Min}) entirely contained in $ \partial I^+(S) $ and starting at $ S $. Note that, in our setting, this definition of lightlike geodesics coincides with that of cone geodesics (Def. \ref{def:cone_geod}), since for any causal curve $ \gamma $, $ \gamma(t_2) \notin \mathring{J}^+(\gamma(t_1)) = \text{Int}(J^+(\gamma(t_1))) $ if and only if $ \gamma(t_1) \rightarrow \gamma(t_2) $. Moreover, $ \gamma $ must depart from $ \partial S $ by continuity.
\qed
\end{proof}


We are now in a position to prove that $ \partial I^+(S) \setminus S $ is a (smooth) lightlike hypersurface for small $ t $. Although we provide a self-contained proof, some comments on its relationship with the results in \cite[\S 4.1]{JPS} are given in Rem. \ref{rem:rel_huygens}. 

\begin{thm}
\label{th:lig_hyp}
 Let $ (M,\mathcal{C}) $ be a globally hyperbolic  cone structure.  For any compact hypersurface with boundary $ S \subset \{t=0\} $, there exists $ \varepsilon > 0 $ such that $ \partial I^+(S) \cap ((0,\varepsilon) \times D) $ is a lightlike hypersurface. 
\end{thm}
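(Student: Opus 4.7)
\emph{Construction of $H$.} At each $p \in \partial S$, since $\partial S$ is codimension $2$ and spacelike, the $L$-orthogonal complement $(T_p \partial S)^{\bot_L}$ is a $2$-plane meeting the cone $\mathcal{C}_p$ in exactly two lightlike lines; using that $\partial/\partial t$ is timelike, one can single out the one pointing outward relative to $S$ and normalize via a timelike one-form $\Omega$, as in the proof of Cor.~\ref{cor:N_unique}, to obtain a smooth lightlike vector field $N$ on $\partial S$. Applying Lem.~\ref{lem:H_hip} yields, for some $\varepsilon_0 > 0$, a smooth hypersurface $H := \text{exp}(\hat{H}_{\varepsilon_0})$, ruled by the lightlike pregeodesics $\gamma_p(\tau) := \text{exp}_p(\tau N_p)$.

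\emph{Lightlikeness of $H$.} By Thm.~\ref{th:cone_geod}, each $\gamma_p$ is a lightlike pregeodesic of $L$, so $\gamma_p'(\tau_0)$ is lightlike at every $q = \gamma_p(\tau_0) \in H$. The tangent space $T_q H$ is spanned by $\gamma_p'(\tau_0)$ together with Jacobi fields $J$ along $\gamma_p$ arising from variations through nearby generators, satisfying $J(0) \in T_p \partial S$ and $J'(0) = \nabla_{J(0)} N$ in the Chern covariant derivative with reference $N$. The function $f(\tau) := g_{\gamma_p'(\tau)}(\gamma_p'(\tau), J(\tau))$ satisfies $f'' \equiv 0$ along the null geodesic, by the standard Jacobi calculation together with the symmetries of the Chern curvature tensor along lightlike geodesics. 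Its initial value $f(0) = g_{N_p}(N_p, J(0)) = 0$ by the $L$-orthogonality of $N$ to $\partial S$, and its initial derivative $f'(0) = g_{N_p}(N_p, \nabla_{J(0)} N)$ vanishes by differentiating the identity $g_N(N,N) \equiv 0$ along $\partial S$. Hence $f \equiv 0$, which together with $g_{\gamma_p'}(\gamma_p', \gamma_p') = 0$ yields $\gamma_p'(\tau_0) \bot_L u$ for every $u \in T_q H$. Prop.~\ref{prop:lig_hyp} then ensures that $H$ is lightlike.

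\emph{Identification with $\partial I^+(S)$ in a future slab.} Write $H^+ := H \cap \{t > 0\}$. Its generators are future-directed causal curves from $\partial S \subset S$, so $H^+ \subset J^+(S) \subset \overline{I^+(S)}$. Since $H$ is lightlike and hence locally achronal, the compactness of $\partial S$ allows us to shrink $\varepsilon_0$ so that $H^+ \cap I^+(S) = \emptyset$, whence $H^+ \subset \partial I^+(S)$. Conversely, Lem.~\ref{lem:cone_geod} supplies, for each $q \in \partial I^+(S) \cap \{t > 0\}$ near $\partial S$, a cone geodesic from some $p \in \partial S$ contained in $\partial I^+(S)$ and passing through $q$. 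A first-variation argument -- varying the starting point along $\partial S$ and using that $\partial I^+(S)$ is achronal together with the two-homogeneity of $L$ -- forces the initial lightlike direction of any such generator to be $L$-orthogonal to $T_p \partial S$ and outward, hence to coincide up to positive scale with $N_p$, and so the generator itself to parametrize a subarc of $\gamma_p$. Choosing $\varepsilon > 0$ small enough that the entire slab $(0,\varepsilon) \times D$ is swept by such arcs within $H^+$, we conclude $\partial I^+(S) \cap ((0,\varepsilon) \times D) = H^+ \cap ((0,\varepsilon) \times D)$, which is a lightlike hypersurface.

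\emph{Main obstacle.} The most delicate point is the $L$-orthogonality of the generators of $\partial I^+(S)$ at $\partial S$ in the third step. In the Lorentzian case this is classical, but in the anisotropic setting one must work with the fundamental tensor $g_v$ at each null reference direction $v$ and show that a non-$L$-orthogonal initial direction at $p \in \partial S$ would produce, via a variation along $\partial S$ combined with the convexity of $\mathcal{C}_p$, timelike curves from $S$ reaching points arbitrarily close to the would-be generator, contradicting the achronality of $\partial I^+(S)$. This is also what pins down the choice of the outward direction in the construction of $N$.
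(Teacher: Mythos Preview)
Your route differs from the paper's in two ways. The paper first proves only the inclusion $R:=\partial I^+(S)\cap((0,\varepsilon)\times D)\subset H$ (via Lem.~\ref{lem:cone_geod} and the orthogonality forced by \cite[Prop.~6.4]{AJ}); since $R$ is already a topological hypersurface, this alone makes $R$ an open subset of the smooth $H$, hence smooth. Then lightlikeness of $R$ is obtained not by any computation on $H$, but by a trichotomy on how $T_pR$ meets $\mathcal C_p$: a spacelike tangent space is excluded because $R$ contains a cone geodesic through $p$; transversality is excluded because two independent lightlike directions $v,u$ would make $v+u$ timelike (convexity plus strong convexity of $\mathcal C_p$), contradicting achronality of $\partial I^+(S)$; hence $T_pR=T_v\mathcal C_p$. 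This uses only global achronality of $\partial I^+(S)$ and avoids any variational calculus.

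Your argument has one genuine gap and one unnecessary detour. The gap is the claim that ``$H$ lightlike hence locally achronal, plus compactness of $\partial S$, gives $H^+\cap I^+(S)=\emptyset$ after shrinking $\varepsilon_0$.'' Local achronality of $H$ controls only timelike curves that stay in a neighbourhood of $H$; it says nothing about a timelike curve that starts at an \emph{interior} point of $S$ and first meets $H^+$ at its endpoint. Ruling this out needs a separate argument (e.g.\ the outward-pointing choice of $N$ together with a local barrier argument at $\partial S$), which you do not supply. The good news is that you do not need this inclusion at all: once you have $\partial I^+(S)\cap\text{slab}\subset H^+$ (your ``Conversely'' paragraph, which matches the paper's argument) and you know $\partial I^+(S)$ is a topological hypersurface, invariance of domain makes $R$ open in $H$ and hence a smooth lightlike hypersurface---which is exactly what the theorem asserts.

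On the Jacobi step: the strategy is valid, but in the Lorentz--Finsler setting the justification ``$f''\equiv 0$ by the symmetries of the Chern curvature tensor'' is too glib. The anisotropic connection introduces Cartan-tensor terms whenever you differentiate $g_V$ with a varying reference vector $V$, so the clean Lorentzian identities require checking. A cleaner route is to note that all longitudinal curves of your variation are lightlike geodesics, whence $L(\partial_\tau\Gamma)\equiv 0$; differentiating in $s$ with reference vector $\partial_\tau\Gamma$ and using torsion-freeness gives $g_{\gamma'}(\gamma',J')\equiv 0$ directly, so $f$ is constant (not merely affine) and equal to $f(0)=0$. Either way, the paper's trichotomy argument sidesteps this computation entirely.
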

\begin{proof}
Let $ N $ be the unique (smooth) lightlike vector field on $ \partial S $ such that $ N $ is $ L $-orthogonal to $ \partial S $ and points outwards, with $ \df t(N) = 1 $. Consider
\begin{equation*}
H \coloneqq \lbrace \text{exp}_p(\tau N_p): p \in \partial S, \tau \in (0,\varepsilon) \rbrace
\end{equation*}
for some $ \varepsilon > 0 $ small enough for Lem. \ref{lem:H_hip} to hold, so that $ H $ is a hypersurface in $ M $. We will see that $ R \coloneqq \partial I^+(S) \cap ((0,t_0) \times D) \subset H $ for some $ t_0 > 0 $.\footnote{In fact, we will prove later that $ R = H $ (see Prop. \ref{prop:cau_cur}).} Choose any $ q \in R $. By Lem. \ref{lem:cone_geod}, $ q $ lies on a cone geodesic entirely contained in $ \partial I^+(S) $ that starts at $ p \in \partial S $. We can assume that the initial velocity of this cone geodesic is given by a lightlike vector $ \tilde{N}_p $ normalized to $ \df t(\tilde{N}_p) = 1 $. If $ \tilde{N}_p $ is not $ L $-orthogonal to $ \partial S $ at $ p $, there exists a timelike curve from $ \partial S $ to $ q $ (see \cite[Prop. 6.4]{AJ}), so $ q \in I^+(\partial S) \subset I^+(S) $ and then $ q \notin \partial I^+(S) $ ($ I^+(S) $ is open), which is a contradiction. Thus we have that $ \tilde{N}_p $ is $ L $-orthogonal to $ \partial S $ at $ p $, and it also has to point out to the exterior of $ S $ because otherwise, the cone geodesic would enter $ I^+(S) $, in contradiction with the fact that it is contained in $ \partial I^+(S) $. We conclude then that $ \tilde{N}_p = N_p $. The unicity of geodesics with the same initial conditions guarantees that, having chosen a sufficiently small $ t_0 > 0 $, $ q = \text{exp}_p(\tau_0 N_p) $ for some $ \tau_0 \in (0,\varepsilon) $ (note that, in this case where $ \df t(N) = 1 $, the cone geodesics are parametrized with respect to $ t $, i.e., $ t \circ \text{exp}(\tau N) = \tau $, so we can choose $ t_0 = \varepsilon $), thus $ q \in H $. This proves that $ R \subset H \subset M $, being $ R $ already a topological hypersurface included in a (smooth) hypersurface $ H $. Therefore, the topological hypersurface $ R $ must also be smooth.

We prove now that $ R $ is lightlike. For any $ p \in R $, there are three possible cases: 
\begin{itemize}
\item $ T_pR \cap \mathcal{C}_p = \emptyset $ $ \Rightarrow $ $ T_pR $ is spacelike. This is a contradiction, since we know that there is at least one lightlike direction $ v $ in $ T_pR $ (as $ p $ lies on a cone geodesic).

\item $ T_pR $ is transverse to $ \mathcal{C}_p $ $ \Rightarrow $ there are two independent lightlike directions in $ T_pR $. But this leads again to a contradiction, because if there was another lightlike direction $ u \in T_pR $ independent of $ v $, then $ v+u $ would be a timelike direction: $ v+u \in \overline{A}_p $ as $ A_p $ (the $ \mathcal{C}_p $-interior) is convex, but $ v+u \notin \mathcal{C}_p $ because of the (non-radial) strong convexity of $ \mathcal{C}_p $; so there would be a timelike curve in $ R $, which is achronal. Therefore, there is a unique lightlike direction $ v $ at each $ T_pR $, but not a timelike one.

\item $ T_pR $ is tangent to $ \mathcal{C}_p $, necessarily along the unique lightlike direction $ v $ $ \Rightarrow $ $ T_pR = T_v\mathcal{C}_p $.
\end{itemize}

Therefore, we conclude that $ R $ is a lightlike hypersurface (recall Def. \ref{def:lig_hyp}).
\qed
\end{proof}

\begin{rem}
The previous theorem also holds if we substitute $ \partial I^+(S) \setminus S $ for $ \partial I^+(\partial S) \setminus \partial S $ (and therefore consider also the inward lightlike direction) but, in this case, we would obtain two disjoint lightlike hypersurfaces.
\end{rem}

\begin{cor}
\label{cor:N}
Let $ R := \partial I^+(S) \cap ((0,\varepsilon)\times D) $, with $ \varepsilon $ small enough for Thm. \ref{th:lig_hyp} to hold, and let $ N $ be its associated lightlike vector field. Then $ N $ is the unique (up to a positive pointwise scale factor) lightlike vector field pointing outwards that is $ L $-orthogonal to $ \partial S $ and any $ R \cap \lbrace t = \tau \rbrace $, $ 0 < \tau < \varepsilon $.
\end{cor}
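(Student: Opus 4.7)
The plan is to apply Corollary \ref{cor:N_unique} to the lightlike hypersurface $ R $ to extract $ N $, and then use the description of $ R $ coming from the proof of Theorem \ref{th:lig_hyp} together with Proposition \ref{prop:tra_int} to verify the stated orthogonality properties.

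Corollary \ref{cor:N_unique} directly provides a smooth lightlike vector field $ N $ on $ R $, unique up to a positive pointwise scale factor; we may normalize so that $ \df t(N) = 1 $. The uniqueness claim of the corollary will follow at the end from the pointwise uniqueness of the degenerate direction of $ R $ (Proposition \ref{prop:lig_hyp}(i)): any lightlike vector field on $ R $ satisfying the stated orthogonality properties must at each point lie in the unique degenerate direction of $ R $, hence be a positive multiple of $ N $.

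The key step is to identify the integral curves of $ N $ with the cone geodesics foliating $ R $ that were produced in the proof of Theorem \ref{th:lig_hyp}. For each $ p \in \partial S $, let $ \tilde N_p $ denote the outward-pointing lightlike vector $ L $-orthogonal to $ \partial S $ at $ p $, normalized to $ \df t(\tilde N_p) = 1 $. The argument of Theorem \ref{th:lig_hyp} shows that every $ q \in R $ is uniquely of the form $ \text{exp}_p(\tau_0 \tilde N_p) $, so the curves $ \tau \mapsto \text{exp}_p(\tau \tilde N_p) $ foliate $ R $ by cone geodesics departing from $ \partial S $. On the other hand, Proposition \ref{prop:int_cur_geod} ensures that the integral curves of $ N $ are also cone geodesics contained in $ R $. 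At each $ q \in R $, the tangent vector of either family is lightlike and tangent to $ R $, so both lie in the unique degenerate direction of $ R $ at $ q $. Uniqueness of lightlike pregeodesics with prescribed initial point and direction (Theorem \ref{th:cone_geod} together with standard ODE theory for the spray of $ L $) then forces the two foliations to coincide as unparametrized cone geodesics. In particular, $ N $ extends continuously to $ \partial S $ proportionally to $ \tilde N $, which is outward-pointing and $ L $-orthogonal to $ \partial S $ by construction.

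Finally, for each $ \tau \in (0, \varepsilon) $ the slice $ \{t = \tau\} $ is spacelike by Definition \ref{def:causality}, so the lightlike field $ N $ is transverse to $ \{t = \tau\} $ at every point of $ R \cap \{t = \tau\} $. Proposition \ref{prop:tra_int} then yields that $ R \cap \{t = \tau\} $ is a spacelike codimension-two submanifold along which $ N $ is $ L $-orthogonal. I expect the main obstacle to be the identification in the third paragraph of the two families of cone geodesics foliating $ R $, which relies on the uniqueness of lightlike pregeodesics with prescribed initial data; this is not spelled out explicitly in the excerpt but is standard for the spray associated with any Lorentz-Finsler metric compatible with $ \mathcal{C} $.
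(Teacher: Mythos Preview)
Your argument for the orthogonality of $N$ along each slice $R\cap\{t=\tau\}$ is exactly the paper's: apply Proposition~\ref{prop:tra_int} with the spacelike hypersurface $\{t=\tau\}$, which is transverse to the causal vector $N$. For the orthogonality along $\partial S$ the paper simply writes ``by continuity, $N$ is also $L$-orthogonal to $\partial S$''; your identification of the integral curves of $N$ with the cone geodesics $\tau\mapsto\exp_p(\tau\tilde N_p)$ from the proof of Theorem~\ref{th:lig_hyp} is a correct (and more explicit) way of producing that continuous extension of $N$ to $\partial S$, so this part is fine, if somewhat more laborious than needed.

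The gap is in your uniqueness argument. You assert that ``any lightlike vector field on $R$ satisfying the stated orthogonality properties must at each point lie in the unique degenerate direction of $R$'', invoking Proposition~\ref{prop:lig_hyp}(i). But that proposition only tells you that the degenerate direction is the unique lightlike direction \emph{inside} $T_pR$; it says nothing about lightlike vectors in $T_pM\setminus T_pR$. At a point $q$ of a slice $\Sigma=R\cap\{t=\tau\}$ there is a second lightlike direction $L$-orthogonal to $T_q\Sigma$ which is \emph{not} tangent to $R$ (the inward-pointing one), so being lightlike and $L$-orthogonal to the slice does not by itself force a vector into $T_qR$. The paper closes this by citing \cite[Prop.~5.2]{AJ}: a codimension-two spacelike submanifold admits exactly two lightlike $L$-orthogonal directions at each point, one outward and one inward, and then observes that $N$ must be the outward one because it is tangent to $R=\partial I^+(S)\cap((0,\varepsilon)\times D)$. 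You should either invoke that result directly, or add the missing step showing that the outward-pointing condition forces the vector into $T_qR$.
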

\begin{proof}
For any $ \tau \in (0,\varepsilon) $, $ R \cap \lbrace t = \tau \rbrace $ is a co-dimension two spacelike submanifold of $ M $, along which $ N $ is $ L $-orthogonal (recall Cor. \ref{prop:tra_int}). By continuity, $ N $ is also $ L $-orthogonal to $ \partial S $. Moreover, as with $ \partial S $, at each $ p \in R \cap \lbrace t = \tau \rbrace $ there are exactly two lightlike directions $ L $-orthogonal to $ R \cap \lbrace t = \tau \rbrace $, one pointing outwards and the other, inwards (see \cite[Prop. 5.2]{AJ}). Specifically, $ N $ has to point outwards because it is tangent to $ R $.
\qed
\end{proof}

\section{Minimization properties of cone geodesics}
\label{sec:min_geod}
One of the main properties of lightlike geodesics in globally hyperbolic Lorentzian spacetimes is that they locally minimize the arrival time (the well-known Fermat's principle). The framework and results we have developed so far enable us to directly extend  this result to the cone structures setting, and even provide a  generalized version. Specifically, we will show that cone geodesics departing $ L $-orthogonally from $ \partial S $ (and pointing outwards) minimize the propagation time from $ S $ (i.e., the arrival time to any observer given by an integral curve of $ \partial/\partial t $).

We still follow in this section the conventions and notation established at the beginning of \S \ref{sec:smooth}. In addition, recall the notation $ R \coloneqq \partial I^+(S) \cap ((0,\varepsilon)\times D) $, with $ \varepsilon $ small enough for Thm. \ref{th:lig_hyp} to hold, and $ N $ will denote its associated  lightlike  vector field.

\begin{prop}
\label{prop:cau_cur}
The only causal curves from $ S $ entirely contained in $ R $ are all the cone geodesics starting at $ \partial S $ with initial velocity $ L $-orthogonal to $ \partial S $ and pointing outwards. In fact, these cone geodesics are integral curves of $ N $ when suitably parametrized.
\end{prop}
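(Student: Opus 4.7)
The plan is to combine the structural facts already available: $R:=\partial I^+(S)\cap((0,\varepsilon)\times D)$ is a lightlike hypersurface (Thm. \ref{th:lig_hyp}) whose associated lightlike field $N$ is the unique outward $L$-orthogonal lightlike vector along $\partial S$ and along every slice $R\cap\{t=\tau\}$ (Cor. \ref{cor:N}), and integral curves of $N$ are cone geodesics (Prop. \ref{prop:int_cur_geod}). Both directions of the statement should fall out of these three ingredients.

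For the inclusion ``$\Rightarrow$'': let $\gamma$ be a causal curve with $\gamma(0)\in S$ and $\gamma(\tau)\in R$ for every $\tau>0$. At each such $\tau$ the tangent $\gamma'(\tau)$ is future-directed causal and lies in $T_{\gamma(\tau)}R$; by Prop. \ref{prop:lig_hyp}(iii) the only causal direction tangent to the lightlike hypersurface $R$ is the degenerate one, namely the line through $N_{\gamma(\tau)}$. Therefore $\gamma'(\tau)=\lambda(\tau)N_{\gamma(\tau)}$ with $\lambda(\tau)>0$, which says that $\gamma$ is a reparametrization of an integral curve of $N$ and hence a cone geodesic by Prop. \ref{prop:int_cur_geod}. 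To place $\gamma(0)$ on $\partial S$, I invoke the inclusion $R\subset H=\exp(\hat H_\varepsilon)$ proved inside Thm. \ref{th:lig_hyp}: since $t\circ\exp_p(\tau N_p)=\tau$, one has $\overline H\cap\{t=0\}=\partial S$ and consequently $\gamma(0)\in\overline R\cap S\subseteq\partial S$. Continuity of $N$ up to $\partial S$ (Cor. \ref{cor:N}) and the identity $\gamma'(\tau)=\lambda(\tau)N_{\gamma(\tau)}$ then force $\gamma'(0)$ to be a positive multiple of $N_{\gamma(0)}$, i.e.\ lightlike, $L$-orthogonal to $\partial S$, and outward.

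For the converse ``$\Leftarrow$'' and the final sentence, I reverse the argument. Fix $p\in\partial S$; because $N$ extends smoothly to $\partial S$ with $N_p$ outward-pointing and $\df t(N_p)=1$, the flow of $N$ produces an integral curve through $p$ that enters $R$ for small $\tau>0$, and by Prop. \ref{prop:int_cur_geod} this integral curve is a cone geodesic with initial velocity $N_p$. The uniqueness of lightlike pregeodesics with prescribed initial direction (Thm. \ref{th:cone_geod}) then identifies it with the cone geodesic $\sigma_p(\tau):=\exp_p(\tau N_p)$ up to reparametrization; under the common normalization $\df t(N)=1$ both curves are parametrized by $t$, so they agree on the nose. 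Hence $\sigma_p((0,\varepsilon))\subset R$, which simultaneously gives the reverse inclusion and the last claim of the statement.

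The main obstacle lies in the boundary analysis at $t=0$: one must both (i) justify $\overline R\cap S\subseteq\partial S$, for which the inclusion $R\subset H$ from Thm. \ref{th:lig_hyp} is the key input, and (ii) ensure that the vector field $N$ and its flow extend smoothly across $\partial S$, so that the identification between integral curves of $N$ and the geodesics $\sigma_p$ can be made at the endpoint $\tau=0$; this last point rests on the explicit orthogonality characterisation of $N$ on $\partial S$ given by Cor. \ref{cor:N}. With those two technicalities in place, the rest of the proof is a direct application of the structural results of \S \ref{sec:light_hyp} and \S \ref{sec:smooth}.
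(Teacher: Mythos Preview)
Your proposal is correct and follows essentially the same approach as the paper: in both directions one uses that on the lightlike hypersurface $R$ the only causal direction is $N$ (Prop.~\ref{prop:lig_hyp}), hence causal curves in $R$ are reparametrized integral curves of $N$ and thus cone geodesics (Prop.~\ref{prop:int_cur_geod}), while conversely uniqueness of geodesics identifies any outward $L$-orthogonal cone geodesic with the corresponding integral curve of $N$, which remains in $R$. Your treatment of the boundary behaviour at $t=0$ (placing $\gamma(0)$ on $\partial S$ via $R\subset H$ and extending $N$ to $\partial S$ via Cor.~\ref{cor:N}) is more explicit than the paper's, which handles this point rather tersely, but the underlying argument is the same.
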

\begin{proof}
If $ \gamma $ is a causal curve contained in $ R $, then its velocity $ \gamma' $ must be lightlike (recall Prop. \ref{prop:lig_hyp}) and thus proportional to $ N $ at each point. Without loss of generality, we can assume that $ N $ is normalized and $ \gamma $ parametrized in such a way that $ \gamma'(t) = N_{\gamma(t)} $ (e.g., if $ \df t(N)=1 $ then $ \gamma $ has to be parametrized with respect to $ t $), i.e., $ \gamma $ is an integral curve of $ N $ and hence a cone geodesic (by Prop. \ref{prop:int_cur_geod}) with initial velocity $ L $-orthogonal to $ \partial S $ and pointing outwards (recall Cor. \ref{cor:N}). Conversely, if $ \gamma $ is a cone geodesic with initial velocity $ \gamma'(0) $ $ L $-orthogonal to $ \partial S $ and pointing outwards, then $ \gamma'(0) = N_{\gamma(0)} $ up to reparametrizations (again by Cor. \ref{cor:N}). By the unicity of geodesics with the same initial conditions, $ \gamma $ must coincide with the integral curve of $ N $ starting at $ \gamma(0) $, which is contained in $ R $.
\qed
\end{proof}

\begin{thm}
\label{th:min_time}
 Within the hypothesis of Thm. \ref{th:lig_hyp} (with timelike $ \partial/\partial t $), let $ \gamma $ be a cone geodesic departing $ L $-orthogonally from $ \partial S $ and pointing outwards. Then for any $ p_0 = (t_0,x_0) \in \textup{Im}(\gamma) \subset \R \times D $,  with  $ t_0 < \varepsilon $, $ \gamma $ is the causal curve from $ S $ that arrives strictly first at the vertical line $ l_{x_0}: t \mapsto (t,x_0) $ (up to reparametrizations).
\end{thm}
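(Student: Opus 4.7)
The plan is to combine Proposition~\ref{prop:cau_cur}---which already identifies $\gamma$ (up to reparametrization) with an integral curve of $N$ contained in $R\subset\partial I^+(S)$---with the standard push-up property of causal relations: a piecewise causal curve containing a timelike subsegment has chronologically related endpoints. The pivotal fact is that $p_0\in R\subset\partial I^+(S)$ lies outside the open set $I^+(S)$, which we will exploit to derive contradictions from hypothetical faster causal curves.

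First I would rule out strictly earlier arrivals. Let $\sigma\colon[a,b]\to M$ be a piecewise smooth causal curve with $\sigma(a)\in S$ and $\sigma(b)=(t_\sigma,x_0)\in l_{x_0}$, and suppose $t_\sigma<t_0$. Because $\partial/\partial t$ is timelike by hypothesis, the vertical segment $\beta\colon[t_\sigma,t_0]\to M$, $s\mapsto(s,x_0)$, is a timelike curve from $\sigma(b)$ to $p_0$. Concatenating $\sigma$ with $\beta$ produces a piecewise causal curve from $S$ to $p_0$ whose final portion is the timelike subsegment $\beta$; push-up then forces $p_0\in I^+(S)$, contradicting $p_0\in\partial I^+(S)$. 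Hence $t_\sigma\geq t_0$.

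Next, suppose $t_\sigma=t_0$, i.e.\ $\sigma(b)=p_0$; the remaining task is to identify $\sigma$ with $\gamma$ up to reparametrization. I claim first that $\sigma|_{(a,b]}\subset R$. Since $t$ is a temporal function, $t\circ\sigma$ is strictly increasing, so $\sigma((a,b])\subset(0,t_0]\subset(0,\varepsilon)$. Moreover, if $\sigma(s_0)\in I^+(S)$ for some $s_0\in(a,b]$, concatenating a timelike curve from $S$ to $\sigma(s_0)$ with the tail $\sigma|_{[s_0,b]}$ again yields a causal curve from $S$ to $p_0$ containing a timelike subsegment; push-up would give $p_0\in I^+(S)$, a contradiction. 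Therefore $\sigma|_{(a,b]}\subset\overline{I^+(S)}\setminus I^+(S)=\partial I^+(S)$, whence $\sigma|_{(a,b]}\subset R$. Proposition~\ref{prop:cau_cur} then shows that $\sigma$ is, up to reparametrization, an integral curve of $N$ starting at a point of $\partial S$. Since $\gamma$ is also such an integral curve and both pass through $p_0$, uniqueness of integral curves forces $\sigma$ and $\gamma$ to coincide as flow lines; that is, $\sigma$ is a reparametrization of $\gamma$.

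The main obstacle is a clean application of the push-up property inside the cone-structure framework: a piecewise causal curve containing a timelike subsegment must have chronologically related endpoints. This is well known (it rests on the openness of $I^+$ together with the convex interior of the cones; see, e.g., \cite{Min,FS,AJ}) but is not isolated as a separate lemma in the excerpt, so some care with referencing is needed when invoking it twice above. A minor secondary point is ensuring that $\sigma$ remains in the strip $(0,\varepsilon)\times D$ where Thm.~\ref{th:lig_hyp} applies, which is automatic from strict monotonicity of $t\circ\sigma$ and $t_0<\varepsilon$.
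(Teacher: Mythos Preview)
Your proof is correct and follows essentially the same route as the paper's: rule out $t_\sigma<t_0$ by concatenating with the timelike vertical segment and invoking push-up, and in the $t_\sigma=t_0$ case show the curve stays in $\partial I^+(S)$, apply Prop.~\ref{prop:cau_cur}, and conclude by uniqueness of integral curves through $p_0$. The push-up step you flag as needing a reference is precisely \cite[Prop.~6.5]{AJ}, which is what the paper cites at both junctures.
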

\begin{proof}
Fix any $ p_0 = (t_0,x_0) \in \text{Im}(\gamma) $ and suppose there is a causal curve $ \varphi $, different from $ \gamma $, that goes from $ S $ to $ (t_1,x_0) $, with $ t_1 \leq t_0 $. If $ t_1 = t_0 $, $ \varphi $ has to be contained in $ \partial I^+(S) $ because otherwise, it would enter $ I^+(S) $ and could not reach $ (t_0,x_0) \in \partial I^+(S) $ (this is a consequence of \cite[Prop. 6.5]{AJ}). But then $ \gamma $ and $ \varphi $ are both integral curves of $ N $ (by Prop. \ref{prop:cau_cur}) arriving at the same point, so $ \gamma = \varphi $ (up to reparametrizations), which contradicts the initial assumption. If $ t_1 < t_0 $, we can construct the (piecewise smooth) curve given by $ \varphi $ from $ S $ to $ (t_1,x_0) $, and by the timelike vertical line $ l_{x_0} $ from $ (t_1,x_0) $ to $ (t_0,x_0) $. Then, by \cite[Prop. 6.5]{AJ} there exists a timelike curve from $ S $ to $ (t_0,x_0) $ and therefore $ (t_0,x_0) \in I^+(S) $, which is a contradiction.
\qed
\end{proof}

\begin{rem}
\label{rem:partial_t}
The assumption made at the beginning of \S \ref{sec:smooth} that $ \partial/\partial t $ is timelike only becomes truly crucial in the previous theorem. Indeed, $ \partial/\partial t $ plays the role of an observers' vector field, and the cone geodesics of the previous result are time-minimizing with respect to the time these observers measure. In Relativity, no observer can move faster than light, which ensures that $ \partial/\partial t $ is timelike. However, cone structures can also be applied in non-relativistic settings to describe the propagation of a wave that propagates through a medium \cite{CJS,JPS}. When the medium moves with respect to $ \partial/\partial t $ faster than the wave, $ \partial/\partial t $ becomes spacelike and the cone geodesics that go against the current lose the property of being time-minimizing with respect to the time measured by $ \partial/\partial t $. Nevertheless, in this case we can define an observers' vector field co-moving with the medium (and therefore, timelike), providing a new decomposition of $ M $ as a product $ \R \times D $ with respect to which the previous theorem would still hold (see \cite[\S 6]{JPS}).

Note that using this technique, even if the original $ \partial/\partial t $ was not timelike, we could always select a new temporal fuction $ \tilde t $ with $ \partial/\partial\tilde t $ timelike in a different decomposition of $ M $.  Therefore, every result we have stated before Thm. \ref{th:min_time} is still valid when $ \partial/\partial t $ is not timelike, since being a lightlike hypersurface (the key result from which the others follow) is independent of the choice of the temporal function. The previous theorem, however, is an exception because the time-minimizing property is measured with respect to the selected temporal function.
\end{rem}

Anyway, whether $ \partial/\partial t $ is timelike or not, we can still ensure the existence of time-minimizing (among causal curves) cone geodesics  (see also \cite[Thm.~2.49]{Min}).\footnote{This guarantees the existence of solution to Zermelo's navigation problem, which seeks the fastest trajectory between two prescribed points for a moving object with respect to a medium; see \cite{CJS}.}

\begin{prop}
\label{cor:uni_con_geod}
For any $ x_0 \in \pi(J^+(S)) \setminus S \subset D $ there exists a time-minimizing cone geodesic from $ S $ to $ l_{x_0} $, necessarily contained in $ \partial I^+(S) $.
\end{prop}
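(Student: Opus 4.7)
The plan is to reduce existence to minimizing the $t$-coordinate on the vertical line $l_{x_0}$ within $J^+(S)$. Define
\[
t^* := \inf\{\,t \in \R : (t,x_0) \in J^+(S)\,\},
\]
which is finite and well defined because $x_0 \in \pi(J^+(S))$. Since $t$ is a Cauchy temporal function, it is strictly increasing along every future-directed causal curve, so $J^+(S) \cap \{t=0\} = S$; combined with $x_0 \notin S$, this gives $t^* > 0$. Global hyperbolicity together with the compactness of $S$ yields that $J^+(S)$ is closed (this is the cone-structure analogue of the classical Lorentzian fact, available, e.g., via the results in \cite{Min,FS}), so the infimum is attained: $(t^*, x_0) \in J^+(S)$.

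Next I would show that $(t^*, x_0) \in \partial I^+(S) \setminus S$. Since $I^+(S)$ is open and $I^+(S) \subset J^+(S)$, membership of $(t^*, x_0)$ in $I^+(S)$ would furnish an $\eta > 0$ with $(t^* - \eta, x_0) \in I^+(S) \subset J^+(S)$, contradicting the definition of $t^*$. Hence $(t^*, x_0) \in J^+(S) \setminus I^+(S)$, which equals $\partial I^+(S)$ since $J^+(S)$ is closed and $\partial I^+(S)= \partial J^+(S) = \overline{J^+(S)} \setminus I^+(S)$. Because $t^* > 0$, the point lies in $\partial I^+(S) \setminus S$.

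Now I would invoke Lemma \ref{lem:cone_geod}: there exists a cone geodesic $\gamma$, entirely contained in $\partial I^+(S)$, running from a point of $\partial S$ to $(t^*, x_0)$. Since $\partial S \subset S$, $\gamma$ is a causal curve from $S$ reaching $l_{x_0}$ at the value $t^*$. For any other causal curve $\varphi$ from $S$ meeting $l_{x_0}$ at $(t_1,x_0)$, the definition of $t^*$ forces $t_1 \geq t^*$, so $\gamma$ indeed minimizes the arrival time from $S$ to $l_{x_0}$; by construction it is contained in $\partial I^+(S)$, as required.

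The main obstacle is the closedness of $J^+(S)$ in this cone-structure setting: it is the only non-trivial external ingredient, but it follows directly from the global-hyperbolicity theory for cone structures developed in \cite{Min,FS,JS14}, so no new work is needed. Every other step is a direct application of the infimum construction, the openness of $I^+$, and Lemma \ref{lem:cone_geod}.
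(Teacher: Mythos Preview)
Your proof is correct and follows essentially the same route as the paper: use global hyperbolicity to ensure $J^+(S)$ is closed, take the minimal $t$-value on $l_{x_0}\cap J^+(S)$, observe that this point lies in $\partial I^+(S)\setminus S$, and then apply Lemma~\ref{lem:cone_geod} to produce the required cone geodesic. Your version is simply more detailed in justifying $t^*>0$ and the membership in $\partial I^+(S)$, but there is no substantive difference in strategy.
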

\begin{proof}
Since $ J^+(S) $ is closed (due to the  global  hyperbolicity of $ M $; see \cite[\S 2.10]{Min}), there exists $ t_0 = \min\{ t \in \R: (t,x_0) \in J^+(S) \} $. Then $ (t_0,x_0) \in \partial I^+(S) $ and by Lem. \ref{lem:cone_geod} there exists a cone geodesic from $ \partial S $ to $ (t_0,x_0) $ entirely contained in $ \partial I^+(S) $. Clearly this cone geodesic must be time-minimizing among causal curves from $ S $, since any $ (t,x_0) $ with $ t < t_0 $ does not belong to $ J^+(S) $ and therefore, it is not reachable by any causal curve from $ S $.
\qed
\end{proof}

\begin{rem}
\label{rem:rel_huygens}
 Observe that all the results in this section have been obtained by exploiting the smoothness of $ \partial I^+(S) \setminus S $, in the spirit of making this work as self-contained as possible. A different approach focusing on the null cut function is carried out in \cite[\S 4.1]{JPS} with an analogous outcome. Specifically, we stress that the following statements are equivalent (in globally hyperbolic cone structures with timelike $ \partial/\partial t $): 
\begin{itemize}
\item \ $ \partial I^+(S) \setminus S $ is smooth (and thus a lightlike hypersurface) in $ (0,\varepsilon)\times D$, for some $ \varepsilon > 0 $ (Thm.~\ref{th:lig_hyp}). 
\item \ Any cone geodesic departing $ L $-orthogonally from $ \partial I^+(S) $ (and pointing outwards) does  not reach its cut point (i.e., does  not leave $ \partial I^+(S) $) in the region $(0,\varepsilon)\times D $ (\cite[Thm. 4.8]{JPS}). 
\item  Any causal curve $ \gamma $ entirely contained in $ \partial I^+(S) $ is necessarily a cone geodesic $ L $-orthogonal to $ \partial S $ (Prop. \ref{prop:cau_cur} and \cite[Prop. 4.4]{JPS}) that minimizes the propagation time from $ S $ while in the region $(0,\varepsilon)\times D$  (Thm. \ref{th:min_time} and \cite[Lem. 4.7]{JPS}). 
\end{itemize}

Also, it is worth mentioning that \cite{JPS} studies the general case in which $ S $ is a submanifold of arbitrary co-dimension.
\end{rem}

\section*{Acknowledgments}
The authors warmly acknowledge Prof. M. S\'{a}nchez (Universidad de Granada) for the careful reading of a preliminary version of this work and for his ever helpful advice and comments. This work is a result of the activity developed within the framework of the Programme in Support of Excellence Groups of the Regi\'on de Murcia, Spain, by Fundaci\'on S\'eneca, Science and Technology Agency of the Regi\'on de Murcia. MAJ was partially supported by MICINN/FEDER project reference PGC2018-097046-B-I00 and Fundaci\'on S\'eneca (Regi\'on de Murcia) project reference 19901/GERM/15, Spain, and EPR by MINECO/FEDER project reference MTM2016-78807-C2-1-P, MICINN/FEDER project reference PGC2018-097046-B-I00 and Contratos Predoctorales FPU-Universidad de Murcia, Spain.

%
%

\end{document}